\newtheorem{thm}{Theorem}[section]
\newtheorem{lem}{Lemma}[section]
\newtheorem{ass}{Assumption}[section]
\theoremstyle{definition}
\newtheorem{defn}{Definition}[section]
\theoremstyle{Condition}
\theoremstyle{remark}
\numberwithin{equation}{section}
\theoremstyle{example}
\numberwithin{equation}{section}
\begin{document}

\bigskip
\bigskip

\bigskip

\begin{center}

\textbf{\large An inertial primal-dual fixed point algorithm  for
composite optimization  problems}

\end{center}

\begin{center}
Meng Wen $^{1,2}$, Yu-Chao Tang$^{3}$, Jigen Peng$^{1,2}$
\end{center}

\begin{center}
1. School of Mathematics and Statistics, Xi'an Jiaotong University,
Xi'an 710049, P.R. China \\
2. Beijing Center for Mathematics and Information Interdisciplinary
Sciences, Beijing, P.R. China

 3. Department of Mathematics, NanChang University, Nanchang
330031, P.R. China
\end{center}

\footnotetext{\hspace{-6mm}$^*$ Corresponding author.\\
E-mail address: wen5495688@163.com}

\bigskip

\noindent  \textbf{Abstract} In this paper, we consider an inertial
primal-dual fixed point algorithm (IPDFP) to compute the
minimizations of the following Problem (1.1). This is a full
splitting approach, in the sense that the nonsmooth functions are
processed individually via their proximity operators. The
convergence of the IPDFP is obtained by reformulating the Problem
(1.1) to the sum of three convex functions. This work brings
together and notably extends several classical splitting schemes,
like the  primal-dual method proposed by Chambolle and Pock, and the
recent proximity algorithms of Charles A. et al designed for the
$L_{1}$/TV image denoising model. The iterative algorithm is used
for solving nondifferentiable convex optimization problems arising
in image processing. The experimental results indicate that the
proposed IPDFP iterative algorithm performs well with respect to
state-of-the-art methods.

\bigskip
\noindent \textbf{Keywords:} composite optimization; operator splitting; proximity operator; inertial

\noindent \textbf{MR(2000) Subject Classification} 47H09, 90C25,

\section{Introduction}

The purpose of this paper is to designing and discussing an
efficient algorithmic framework with inertial version for minimizing
the following problem
$$\min_{x\in\mathcal{X}} \sum_{i=1}^{m}F_{i}(K_{i}x)+G(x),\eqno{(1.1)}$$
where $\mathcal{X}$ and $\{\mathcal{Y}_{i}\}_{i=1}^{m}$ are Hilbert
spaces, and $G\in\Gamma_{0}(\mathcal{X})$,
$F_{i}\in\Gamma_{0}(\mathcal{Y}_{i})$ respectively;
$K_{i}:\mathcal{X}\rightarrow\mathcal{Y}_{i}$ be a continuous linear
operator, for $i=1,\cdots,m$. Here and in what follows, for a real
Hilbert space $H$, $\Gamma_{0}(H)$ denotes the collection of all
proper lower semi-continuous convex functions from $H$ to
$(-\infty,+\infty]$.
\par
To that end let us first rephrase Problem (1.1) as
$$\min_{x\in\mathcal{X}^{m}} \sum_{i=1}^{m}(F_{i}(K_{i}x_{i})+\frac{1}{m}G(x_{i}))+\delta_{C}(x),\eqno{(1.2)}$$
where the Hilbert space
$\mathcal{X}^{m}:=\mathcal{X}_{1}\times\cdots\times\mathcal{X}_{m}$,
equipped with the inner product $<x,x'>:=
\sum_{i=1}^{m}<x_{i},x'_{i}>$, and
$K_{i}:\mathcal{X}_{i}\rightarrow\mathcal{Y}_{i}$ be a continuous
linear operator. The notation $x_{i}$ represents the $i$-th
component of any $x\in\mathcal{X}^{m}$, and $C$ is the space of
vectors $x\in\mathcal{X}^{m}$ such that $x_{1}=\cdots=x_{m}$. We
define the linear function
$K:\mathcal{X}^{m}\rightarrow\mathcal{Y}^{m}$ by
$Kx:=(K_{1}x_{1},\cdots,K_{m}x_{m}$), and we set
$F(Kx)=\sum_{i=1}^{m}(F_{i}(K_{i}x_{i})$ and
$\bar{G}(x)=\sum_{i=1}^{m}\frac{1}{m}G(x_{i})$  . Then the Problem
(1.2) can be rewritten as
$$\min_{x\in\mathcal{X}^{m}}F(Kx)+\bar{G}(x)+\delta_{C}(x),\eqno{(1.3)}$$

In order to solve the above problem, first we consider the following
 general optimization problem:
$$\min_{x\in\mathcal{X}}F(Kx)+\bar{G}(x)+H(x),\eqno{(1.4)}$$
where $F\in\Gamma_{0}(\mathcal{Y}^{m})$,
$\bar{G},H\in\Gamma_{0}(\mathcal{X}^{m})$.\\
 When $\bar{G}$ is differentiable on $\mathcal{X}^{m}$ and its
gradient $\nabla\bar{G}$  is $\beta$-Lipschitz continuous, for some
$\beta\in[0,+\infty[$; that is,
$$\|\nabla\bar{G}(x)-\nabla\bar{G}(x')\|\leq\beta\|x-x'\|, \forall (x, x')\in\mathcal{X}^{m}\times\mathcal{X}^{m}.$$
the primal-dual method in [1] can be used to solve (1.4).
Elaborating on the method introduced by Laurent Condat in [1] and
the method given by Nesterov in [5], we provide an iterative
algorithm for solving (1.4) which we refer to as IPDFP(inertial
primal-dual fixed point algorithm). we will give the  details of our
method in the section 3.
\par
Despite the form of (1.1) is simply, many problems in image
processing can be formulated it. For instance, the following
$L_{1}/\varphi\circ B$ model. This model minimizes the sum of the
$l_{1}$ fidelity term and the composition of a convex function with
a matrix and includes the $L_{1}$/TV denoising model and the
$L_{1}$/TV inpainting model as special cases.
$$\min_{x\in R^{2}}\|x-b\|_{1}+\delta_{C}(x)+\varphi(\circ B)(x),\eqno{(1.5)}$$
where $\varphi$ is a given convex on $R^{m}$ and $B$ is a given
$m\times n$ matrix. For the anisotropic total-variation $\varphi$ is
the norm $\|\cdot\|_{1}$ while for the isotropic total-variation .
is a linear combination of the norm  $\|\cdot\|_{2}$ in $R^{2}$. The
matrix $B$ for the both cases is the first order difference matrix.
For higher-order total-variations (see, e.g., [19-24]), $B$ may be
chosen to be a higher-order difference matrix. Problem (1.5) can be
expressed in the form of (1.1) by setting $m=2$, $G(x)=\|x-b\|_{1}$,
$F_{1}=\delta_{C}$, $K_{1}=I$ and $F_{2}=\varphi$, $K_{2}=B$. One of
the main difficulties in solving it is that $F_{i}$ and $G$ are
non-differentiable. The case often occurs in many problems we are
interested in.

\par
In this paper, the contributions of us are the following aspects:
\par
(I) We  provide  an inertial primal-dual fixed point algorithm to
solve the general Problem (1.1), which is inspired by the
primal-dual splitting method present by Laurent Condat [1] and the
method introduced by Nesterov [5]. We refer to our algorithm as
IPDFP. Firstly, when $\bar{G}$ is differentiable and $\alpha_{k}=0$
in our method, the primal-dual splitting method introduced by
 Laurent Condat [1] is a special case of our algorithm. Secondly, for $m=1$ and
 $\alpha_{k}=0$,it includes the well known first-order primal-dual algorithm proposed by Chambolle and
 Pock. Finally, when  $m=1$ and $K_{1}=I$, we can obtain the inertial forward-backward
 algorithm introduced by Dirk A. Lorenz and Thomas Pock [18].

\par
(II) Based on the idea of preconditioning techniques, we propose
simple and easy to compute diagonal preconditioners for which
convergence of the algorithm is guaranteed without the need to
compute any step size parameters and it leaves the computational
complexity of the iterations basically unchanged. As a by-product,
we show that for a certain instance of the preconditioning, the
proposed algorithm is equivalent to the old and widely unknown
primal-dual algorithm.

\par
(III) With the idea of the inertial version of the
Krasnosel'skii-Mann iterations algorithm for approximating the set
of fixed points of a nonexpansive operator and the particular inner
product defined by a symmetric positive definite map $P$ which can
be interpreted as a preconditioner or variable metric, we prove the
convergence of our method.

The rest of this paper is organized as follows. In the next section,
 we introduce some notations used throughout in the paper. In section 3, we devote
to introduce  IPDFP and SIPDFP algorithm,  and the relation between
them, we also show how the IPDFP splits  into SIPDFP and the
convergence of proposed method. In section 4, we present the
preconditioned primal-dual algorithm and give conditions under which
convergence of the algorithm is guaranteed. We propose a family of
simple and easy to compute diagonal preconditioners, which turn out
to be very efficient on many problems.  In the final section, we
show the numerical performance and efficiency of propose algorithm
through some examples in the context of large-scale
$l_{1}$-regularized logistic regression.

\section{Preliminaries }
Throughout the paper, we  denote by $\langle \cdot, \cdot\rangle$
the inner product on $\mathcal{X}$ and by $\|\cdot\|$ the norm on
$\mathcal{X}$.

\begin{ass}
The infimum of Problem (1.4) is attained. Moreover, the following
qualification condition holds
$$0\in ri(dom\, h-D\, dom\, g).$$
\end{ass}
The dual problem corresponding to the primal Problem (1.4) is
written
$$\min_{y\in\mathcal{Y}} (f+g)^{\ast}(-D^{\ast}y)+ h^{\ast}(y),$$
where $a^{\ast}$ denotes the Legendre-Fenchel transform of a
function $a$ and where $D^{\ast}$ is the adjoint of $D$. With the
Assumption 2.1, the classical Fenchel-Rockafellar duality theory
[3], [11] shows that
$$\min_{x\in\mathcal {\textbf{X}}} f(x)+g(x)+ (h\circ D)(x)=-\min_{y\in\mathcal{Y}} (f+g)^{\ast}(-D^{\ast}y)+ h^{\ast}(y).\eqno{(2.1)}$$

\begin{defn}
 Let $f$ be  a real-valued convex function on
$\mathcal{X}$, the operator prox$_{f}$ is defined by
\begin{align*}
prox_{f}&:\mathcal{X}\rightarrow\mathcal{X}\\
& x\mapsto \arg \min_{y\in
\mathcal{X}}f(y)+\frac{1}{2}\|x-y\|_{2}^{2},
\end{align*}
called the proximity operator of $f$.

\end{defn}

\begin{defn}
Let $A$ be a closed convex set of $\mathcal{X}$. Then the indicator
function of $A$ is defined as
$$
\delta_{A}(x) = \left\{
\begin{array}{l}
0,\,\,\, \,\,if x\in A,\\
\infty,\,\,\, otherwise .
\end{array}
\right.
$$
\end{defn}
It can easy see the proximity operator of the indicator function in
a closed convex subset $A$ can be reduced a projection operator onto
this closed convex set $A$. That is,
$$prox_{\iota_{A}}=proj_{A},$$
where proj is the projection operator of $A$.

\begin{defn}
(Nonexpansive operators and firmly nonexpansive operators [3]). Let
$\mathcal{H}$ be a Euclidean space (we refer to [3] for an extension
to Hilbert spaces). An operator $T : \mathcal{H} \rightarrow
{\mathcal{H}}$ is nonexpansive if and only if it satisfies
$$\|Tx-Ty\|_{2}\leq\|x-y\|_{2}\,\,\, for\,\,all\,\,\, (x,y)\in \mathcal{H}^{2}.$$
$T$ is firmly nonexpansive if and only if it satisfies one of the
following equivalent conditions:
\par
(i)$\|Tx-Ty\|_{2}^{2}\leq\langle Tx-Ty,x-y\rangle$\,\,\,
for\,\,all\,\,\, $(x,y)\in \mathcal{H}^{2}$;
\par
(ii)$\|Tx-Ty\|_{2}^{2}=\|x-y\|_{2}^{2}-\|(I-T)x-(I-T)y\|_{2}^{2}$\,\,\,
for\,\,all\,\,\, $(x,y)\in \mathcal{H}^{2}$.
\par
It is easy to show from the above definitions that a firmly
nonexpansive operator $T$ is nonexpansive.
\end{defn}
\begin{defn}
 A mapping $T : \mathcal{H} \rightarrow \mathcal{H}$ is said to be an averaged mapping, if it can
be written as the average of the identity $I$ and a nonexpansive
mapping; that is,
$$T = (1-\alpha)I +\alpha S,\eqno{(2.2)}$$
where $\alpha$ is a number in ]0, 1[ and $S : \mathcal{H}
\rightarrow \mathcal{H}$ is nonexpansive. More precisely, when (2.2)
or the following inequality (2.2) holds, we say that $T$ is
$\alpha$-averaged.
$$\|Tx-Ty\|^{2}\leq \|x-y\|^{2}-\frac{(1-\alpha)}{\alpha}\|(I -T)x-(I -T)y\|^{2},\forall x,y\in\mathcal{H}.\eqno{(2.3)}$$

\end{defn}
A 1-averaged operator is said non-expansive. A $\frac{1}{ 2}$
-averaged operator is said firmly non-expansive.

 We refer the
readers to [3] for more details. Let $M : \mathcal{H} \rightarrow
\mathcal{H}$ be a set-valued operator. We denote by $ran(M) := \{v
\in\mathcal{H} : \exists u \in\mathcal{H}, v \in Mu\}$ the range of
$M$, by $ gra(M) := \{(u, v) \in \mathcal{H}^{2} : v \in Mu\}$ its
graph, and by $M ^{-1}$ its inverse; that is, the set-valued
operator with graph ${(v, u) \in \mathcal{H}^{2} : v \in Mu}$. We
define $ zer(M) := \{u \in \mathcal{H} : 0\in Mu\}$. $M$ is said to
be monotone if $\forall(u, u' ) \in \mathcal{H}^{2},\forall(v, v' )
\in Mu\times Mu'$, $\langle u-u' , v-v' \rangle\geq 0$ and maximally
monotone if there exists no monotone operator $M'$ such that $
gra(M) \subset gra(M') \neq gra(M)$.

The resolvent $(I + M)^{-1}$ of a maximally monotone operator $M :
\mathcal{H} \rightarrow \mathcal{H}$ is defined and single-valued on
$\mathcal{H}$ and firmly nonexpansive. The subdifferential $\partial
J$ of $J\in \Gamma_{0}(\mathcal{H})$ is maximally monotone and $(I
+\partial J)^{-1} = prox_{J}$ .

 Further, let us mention some classes of operators that are used in the paper. The
operator $A$ is said to be uniformly monotone if there exists an
increasing function $\phi_{A} : [0;+1) \rightarrow [0;+1]$ that
vanishes only at 0, and
$$\langle x-y,u-v\rangle\geq\phi_{A}( \|x-y\|), \forall(x,u),(y,v)\in gra(A).\eqno{(2.5)}$$
Prominent representatives of the class of uniformly monotone
operators are the strongly monotone operators. Let
 $\gamma> 0$ be arbitrary. We say that $A$ is
$\gamma$-strongly monotone, if $\langle x-y,u-v\rangle\geq\gamma
\|x-y\|^{2}$, for all $(x,u),(y,v)\in gra(A)$.

\begin{lem}
( see[2,9-10]). Let $(\varphi^{k})_{k\in\mathbb{N}}$;
$(\delta_{k})_{k\in\mathbb{N}}$ and $(\alpha_{k})_{k\in\mathbb{N}}$
be sequences in [0;+1) such that
$\varphi^{k+1}\leq\varphi^{k}+\alpha_{k}(\varphi^{k}-\varphi^{k-1})+\delta_{k}$
for all $k\geq 1$, $\sum_{k\in\mathbb{N}}\delta_{k} < +\infty$ and
there exists a real number $\alpha$ with $0\leq \alpha_{k}
\leq\alpha < 1$ for all $k\in\mathbb{N}$. Then the following hold:\\
(i) $\sum_{k\geq1}[\varphi^{k}-\varphi^{k-1}]_{+}<+\infty$, where
$[t]_{+}=\max\{t,0\}$;\\
(ii) there exists $\varphi^{\ast}\in[0;+\infty)$ such that
$\lim_{k\rightarrow+\infty}\varphi^{k}=\varphi^{\ast}$.

\end{lem}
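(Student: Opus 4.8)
The plan is to treat this as a standard ``quasi-Fej\'er'' type estimate for inertial sequences, of the kind used to analyze inertial Krasnosel'ski\u\i--Mann and inertial forward--backward schemes. The hypothesis is that $\varphi^{k+1}\le\varphi^{k}+\alpha_{k}(\varphi^{k}-\varphi^{k-1})+\delta_{k}$ with $0\le\alpha_{k}\le\alpha<1$ and $\sum_{k}\delta_{k}<+\infty$, and we must extract (i) summability of the positive parts of the increments and (ii) convergence of $\varphi^{k}$.

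First I would introduce the auxiliary sequence $\mu^{k}:=\varphi^{k}-\alpha\varphi^{k-1}+\sum_{j\ge k}\delta_{j}$, or more simply work with $\theta^{k}:=[\varphi^{k}-\varphi^{k-1}]_{+}$ and the ``energy'' $E^{k}:=\varphi^{k}-\alpha\varphi^{k-1}+2\sum_{j\ge k}\delta_{j}$. The key algebraic step is to show that the defining inequality forces
$$
\varphi^{k+1}-\varphi^{k}\le\alpha(\varphi^{k}-\varphi^{k-1})+\delta_{k}\le\alpha\,[\varphi^{k}-\varphi^{k-1}]_{+}+\delta_{k},
$$
so that taking positive parts and using $[a+b]_{+}\le[a]_{+}+[b]_{+}$ yields $\theta^{k+1}\le\alpha\theta^{k}+\delta_{k}$. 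Iterating this linear recursion and summing over $k$, together with $\alpha<1$ and $\sum_k\delta_k<\infty$, gives $\sum_{k\ge1}\theta^{k}\le\frac{1}{1-\alpha}\bigl(\theta^{1}+\sum_{k}\delta_{k}\bigr)<+\infty$, which is exactly (i).

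For (ii), I would show $(\varphi^{k})$ is a Cauchy-type sequence in $\mathbb{R}$. Write, for $n\ge m$,
$$
\varphi^{n}-\varphi^{m}=\sum_{k=m}^{n-1}(\varphi^{k+1}-\varphi^{k})\le\sum_{k=m}^{n-1}\theta^{k+1},
$$
which tends to $0$ as $m\to\infty$ by (i); hence $\limsup_k\varphi^k-\liminf_k\varphi^k\le 0$, and since the sequence lives in $[0,+\infty)$ it is bounded below, so a finite limit $\varphi^{\ast}\in[0,+\infty)$ exists. Alternatively, and perhaps more cleanly, observe that $\psi^{k}:=\varphi^{k}-\sum_{j=1}^{k-1}\theta^{j}$ satisfies $\psi^{k+1}=\varphi^{k+1}-\theta^{k}-\sum_{j<k}\theta^{j}\le\varphi^{k}-\sum_{j<k}\theta^{j}=\psi^{k}$, so $\psi^{k}$ is nonincreasing; it is also bounded below because $\varphi^{k}\ge0$ and $\sum_j\theta^j<\infty$, hence $\psi^k$ converges, and therefore $\varphi^{k}=\psi^{k}+\sum_{j=1}^{k-1}\theta^{j}$ converges as well. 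Boundedness of $(\varphi^k)$ needed along the way follows a posteriori, or can be established first by noting $\psi^k\le\psi^1$.

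The only mildly delicate point is the very first reduction: one must be careful that $\varphi^{k}-\varphi^{k-1}$ may be negative, so replacing it by its positive part in the recursion for $\theta^{k}$ must be justified by the monotonicity $[\,\cdot\,]_{+}$ is nondecreasing together with $\alpha\ge0$. Everything after that is a routine geometric-series estimate plus the monotone convergence argument for $\psi^{k}$; there is no real analytic obstacle, and in particular no topology beyond the order completeness of $\mathbb{R}$ is needed. This is the discrete lemma that will later be applied with $\varphi^{k}=\|x^{k}-x^{\ast}\|_{P}^{2}$ (or a similar Lyapunov quantity in the $P$-metric) to drive the convergence proof of IPDFP.
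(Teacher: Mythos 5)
Your argument is correct, and it is essentially the standard proof of this lemma from Alvarez (references [2],[10]): the paper itself states the result without proof, citing those sources, and your reduction $\theta^{k+1}\le\alpha\theta^{k}+\delta_{k}$ followed by the geometric-series bound $\sum_{k\ge1}\theta^{k}\le\frac{1}{1-\alpha}\bigl(\theta^{1}+\sum_{k}\delta_{k}\bigr)$ and the tail estimate $\varphi^{n}\le\varphi^{m}+\sum_{k\ge m}\theta^{k+1}$ giving $\limsup\le\liminf$ is exactly that argument. One small correction to your ``alternative'' route: with $\psi^{k}:=\varphi^{k}-\sum_{j=1}^{k-1}\theta^{j}$ the inequality $\psi^{k+1}\le\psi^{k}$ would require $\varphi^{k+1}-\varphi^{k}\le\theta^{k}$, which the hypothesis does not give (only $\varphi^{k+1}-\varphi^{k}\le\alpha\theta^{k}+\delta_{k}$); the fix is to set $\psi^{k}:=\varphi^{k}-\sum_{j=1}^{k}\theta^{j}$, since $\varphi^{k+1}-\theta^{k+1}\le\varphi^{k}$ holds by the definition of the positive part, after which monotonicity and boundedness below go through as you describe. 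Since your primary limsup--liminf argument for (ii) is complete on its own, this slip does not affect the validity of the proposal.
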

\begin{lem}
( [4]).Let $\tilde{M}$ be a nonempty closed and affine subset of a
Hilbert space $\mathcal {\bar{H}}$ and $T : \tilde{M} \rightarrow
\tilde{M}$ a nonexpansive operator such that $Fix(T)\neq\emptyset$.
Considering the following iterative scheme:
$$x^{k+1}=x^{k}+\alpha_{k}(x^{k}-x^{k-1})+\rho_{k}[T(x^{k}+\alpha_{k}(x^{k}-x^{k-1}))-x^{k}-\alpha_{k}(x^{k}-x^{k-1})],\eqno{(2.6)}$$
where $x^{0}$; $x^{1}$ are arbitrarily chosen in $\tilde{M}$,
 $(\alpha_{k})_{k\in \mathbb{N}}$ is nondecreasing with
$\alpha_{1} = 0$ and $0\leq\alpha_{k}\leq\alpha < 1$ for every
$n\geq1$ and $\rho, \theta, \hat{\delta}>0$ are such that
$\hat{\delta}>\frac{\alpha^{2}(1+\alpha)+\alpha\theta}{1-\alpha^{2}}$
and
$0<\rho\leq\rho_{k}<\frac{\hat{\delta}-\alpha[\alpha(1+\alpha)+\alpha\hat{\delta}+\theta]}{\hat{\delta}[1+\alpha(1+\alpha)+\alpha\hat{\delta}+\theta]}$
$\forall k\geq1$.\\
Then the following statements are true:\\
(i) $\sum_{k\in\mathbb{N}}\|x^{k+1}-x^{k}\|^{2}<+\infty$;\\
(ii) $(x^{k})_{k\in\mathbb{N}}$ converges weakly to a point in
$Fix(T)$.
\end{lem}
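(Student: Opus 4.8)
Because $\tilde M$ is affine, the point $y^{k}:=x^{k}+\alpha_{k}(x^{k}-x^{k-1})=(1+\alpha_{k})x^{k}-\alpha_{k}x^{k-1}$ is an affine combination of $x^{k},x^{k-1}\in\tilde M$, hence $y^{k}\in\tilde M$ and $Ty^{k}$ is well defined; since $x^{k+1}=(1-\rho_{k})y^{k}+\rho_{k}Ty^{k}$ is again an affine combination, an easy induction shows all iterates stay in $\tilde M$. (This is exactly where affineness, rather than mere convexity, of $\tilde M$ is used, since $1+\alpha_{k}>1$.) Fix $z\in Fix(T)$ and put $\varphi^{k}:=\|x^{k}-z\|^{2}$. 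The plan is to show that $(\varphi^{k})$ obeys the hypotheses of Lemma 2.1 while at the same time extracting the summable quantity in (i), and then to finish with Opial's lemma and the demiclosedness of $I-T$.

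First I would establish the one-step estimate. From the convex-combination identity $\|(1-\rho_{k})u+\rho_{k}v-z\|^{2}=(1-\rho_{k})\|u-z\|^{2}+\rho_{k}\|v-z\|^{2}-\rho_{k}(1-\rho_{k})\|u-v\|^{2}$ applied to $u=y^{k}$, $v=Ty^{k}$, together with $\|Ty^{k}-z\|\le\|y^{k}-z\|$ (nonexpansiveness, $Tz=z$) and $\|x^{k+1}-y^{k}\|=\rho_{k}\|Ty^{k}-y^{k}\|$, one gets
$$\varphi^{k+1}\le\|y^{k}-z\|^{2}-\frac{1-\rho_{k}}{\rho_{k}}\|x^{k+1}-y^{k}\|^{2}.$$
Expanding $\|y^{k}-z\|^{2}$ via the three-point identity $2\langle x^{k}-z,x^{k}-x^{k-1}\rangle=\varphi^{k}-\varphi^{k-1}+\|x^{k}-x^{k-1}\|^{2}$ gives $\|y^{k}-z\|^{2}=(1+\alpha_{k})\varphi^{k}-\alpha_{k}\varphi^{k-1}+\alpha_{k}(1+\alpha_{k})\|x^{k}-x^{k-1}\|^{2}$. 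Finally the cross term in $\|x^{k+1}-y^{k}\|^{2}$ must be controlled: from $x^{k+1}-y^{k}=(x^{k+1}-x^{k})-\alpha_{k}(x^{k}-x^{k-1})$ and Young's inequality with the parameter $\theta$ one obtains a lower bound $\|x^{k+1}-y^{k}\|^{2}\ge(1-\alpha_{k}\theta)\|x^{k+1}-x^{k}\|^{2}+(\alpha_{k}^{2}-\alpha_{k}/\theta)\|x^{k}-x^{k-1}\|^{2}$. Substituting everything back yields a recursion of the form
$$\varphi^{k+1}-\varphi^{k}-\alpha_{k}(\varphi^{k}-\varphi^{k-1})\le\mu_{k}\|x^{k}-x^{k-1}\|^{2}-\nu_{k}\|x^{k+1}-x^{k}\|^{2},$$
with explicit coefficients $\mu_{k},\nu_{k}$ built from $\alpha_{k},\theta,\rho_{k}$.

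The core of the argument is a Lyapunov/telescoping step. Introduce $\mathcal{E}_{k}:=\varphi^{k}-\alpha_{k}\varphi^{k-1}+c\,\|x^{k}-x^{k-1}\|^{2}$ for an appropriate constant $c$ (this is where $\hat{\delta}$ enters). Using that $(\alpha_{k})$ is nondecreasing, so $-\alpha_{k+1}\varphi^{k}\le-\alpha_{k}\varphi^{k}$, together with the upper bound imposed on $\rho_{k}$, the two conditions $\hat{\delta}>\frac{\alpha^{2}(1+\alpha)+\alpha\theta}{1-\alpha^{2}}$ and $0<\rho\le\rho_{k}<\frac{\hat{\delta}-\alpha[\alpha(1+\alpha)+\alpha\hat{\delta}+\theta]}{\hat{\delta}[1+\alpha(1+\alpha)+\alpha\hat{\delta}+\theta]}$ are precisely what is needed so that $\mathcal{E}_{k+1}\le\mathcal{E}_{k}-\tau\|x^{k+1}-x^{k}\|^{2}$ for some $\tau>0$; moreover $\mathcal{E}_{k}\ge-\alpha\varphi^{k-1}$, and a standard induction from the monotonicity of $(\mathcal{E}_{k})$ shows $(\varphi^{k})$ is bounded, so $(\mathcal{E}_{k})$ is bounded below. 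Summing the displayed descent inequality gives (i), $\sum_{k}\|x^{k+1}-x^{k}\|^{2}<+\infty$; in particular $\delta_{k}:=\mu_{k}\|x^{k}-x^{k-1}\|^{2}$ is summable, and then the recursion above together with Lemma 2.1(ii) shows that $\lim_{k}\varphi^{k}$ exists, i.e. $\lim_{k}\|x^{k}-z\|$ exists for every $z\in Fix(T)$.

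For (ii) I would use Opial's lemma. By (i), $\|x^{k+1}-x^{k}\|\to 0$; since $\|y^{k}-x^{k}\|=\alpha_{k}\|x^{k}-x^{k-1}\|\to 0$, also $\|x^{k+1}-y^{k}\|\to 0$, hence $\|Ty^{k}-y^{k}\|=\rho_{k}^{-1}\|x^{k+1}-y^{k}\|\to 0$ because $\rho_{k}\ge\rho>0$, and by nonexpansiveness $\|Tx^{k}-x^{k}\|\le 2\|x^{k}-y^{k}\|+\|Ty^{k}-y^{k}\|\to 0$. As $(x^{k})$ is bounded (because $\|x^{k}-z\|$ converges), it has weak cluster points, and by the demiclosedness of $I-T$ at $0$ each such cluster point belongs to $Fix(T)$; combined with the existence of $\lim_{k}\|x^{k}-z\|$ for all $z\in Fix(T)$, Opial's lemma gives that $(x^{k})$ converges weakly to a single point of $Fix(T)$.

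The main obstacle, and essentially the only delicate point, is the bookkeeping in the third paragraph: choosing the coefficient $c$ (equivalently, exploiting $\hat{\delta}$) and checking that the stated inequalities relating $\alpha$, $\theta$, $\hat{\delta}$ and $\rho_{k}$ do force the "bad" coefficient $\mu_{k}$ to be absorbed, leaving a strictly negative multiple of $\|x^{k+1}-x^{k}\|^{2}$ in the descent of $\mathcal{E}_{k}$. Once (i) is available, the remainder is the standard Krasnosel'skii–Mann / Opial machinery.
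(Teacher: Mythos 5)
The paper itself offers no proof of this lemma---it is quoted from reference [4]---so there is no internal argument to compare against; your sketch reproduces the standard proof from that source (affineness of $\tilde M$ to keep the extrapolated points $x^{k}+\alpha_{k}(x^{k}-x^{k-1})$ in the domain of $T$, the nonexpansiveness/convex-combination one-step estimate, a Lyapunov-type recursion whose descent is forced by the stated conditions on $\alpha$, $\theta$, $\hat{\delta}$, $\rho_{k}$ in combination with Lemma 2.1, and finally Opial's lemma with demiclosedness of $I-T$), and the outline is correct. The one step you flag but do not carry out---the parameter bookkeeping showing that the coefficient of $\|x^{k+1}-x^{k}\|^{2}$ in the energy descent is strictly negative under the given bounds on $\rho_{k}$ and $\hat{\delta}$---is precisely the computation performed in [4], so nothing in your plan would fail.
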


\section{An inertial primal-dual fixed point algorithm
 }
\subsection{Derivation of the algorithm}
In the paper [5], Nesterov proposed a modification of the heavy ball
method in order to improve the convergence rate on smooth convex
functions. The idea of Nesterov was to use the extrapolated point
$y^{k}$ for evaluating the gradient. Moreover, in order to prove
optimal convergence rates of the scheme, the extrapolation parameter
$\alpha_{k}$ must satisfy some special conditions. The scheme is
given by:
$$
\left\{
\begin{array}{l}
l^{k}=x^{k}+\alpha_{k}(x^{k}-x^{k-1}),\\
x^{k+1}=l^{k}-\bar{\lambda}_{k}\nabla f(l^{k}),
\end{array}
\right.\eqno{(3.1)}
$$
where $\bar{\lambda}_{k}=1/L$, there are several choices to define
an optimal sequence $\alpha_{k}$ [5-8].
\par
For Problem (1.4), When $\bar{G}$ is differentiable on $\mathcal
{X}$ and its gradient $\nabla\bar{G}$  is $\beta$-Lipschitz
continuous, for some $\beta\in[0,+\infty[$. Laurent Condat [1] give
the following method:
\begin{algorithmic}\label{1}
\STATE  Choose $x^{0}\in \mathcal {X}$, $ y^{0}\in \mathcal{Y}$,
relaxation parameters $(\rho_{k})_{k\in \mathbb{N}}$, and proximal
 parameters $\sigma>0$, $\tau>0$. The
iterate, for every $k\geq0$
$$
\left\{
\begin{array}{l}
\tilde{y}^{k+1}=prox_{\sigma F^{\ast}}(y^{k}+\sigma Kx^{k}),\\
\tilde{x}^{k+1}=prox_{\tau H}(x^{k}-\tau \nabla \bar{G}(x^{k})-\tau
K^{\ast}(2\tilde{y}^{k+1}-y^{k})),\\
(x^{k+1}, y^{k+1})=\rho_{k}(\tilde{x}^{k+1},
\tilde{y}^{k+1})+(1-\rho_{k})(x^{k}, y^{k}).
\end{array}
\right.\eqno{(3.2)}
$$
\end{algorithmic}

Based on the idea of Laurent Condat[1] and Nesterov[5],
 we introduce the following new algorithm for solving Problem (1.4).
\begin{algorithm}[H]
\caption{An inertial primal-dual fixed point algorithm(IPDFP).}
\begin{algorithmic}\label{1}
\STATE Initialization: Choose $x^{0}, x^{1},y^{0}, y^{1}\in \mathcal
{X}$, $
 v^{0}, v^{1}\in \mathcal{Y}$, relaxation parameters
$(\rho_{k})_{k\in \mathbb{N}}$,\\ ~~~~~~~~~~~~~~~~~~~extrapolation
parameter $\alpha_{k}$ and proximal
 parameters $\sigma>0$,$\gamma>0$, \\~~~~~~~~~~~~~~~~~~~$\tau>0$.\\
Iterations ($k\geq0$): Update $x^{k}$, $y^{k}$, $v^{k}$ as follows
$$
\left\{
\begin{array}{l}
\xi^{k}=x^{k}+\alpha_{k}(x^{k}-x^{k-1}),\\
\eta^{k}=y^{k}+\alpha_{k}(y^{k}-y^{k-1}),\\
\nu^{k}=v^{k}+\alpha_{k}(v^{k}-v^{k-1}),\\
\tilde{x}^{k+1}=prox_{\sigma H}(\xi^{k}-\sigma \eta^{k}-\sigma K^{\ast}\nu^{k}),\\
\tilde{y}^{k+1}=prox_{\gamma \bar{G}^{\ast}}(\eta^{k}+\gamma \xi^{k}),\\
\tilde{v}^{k+1}=prox_{\tau F^{\ast}}(\nu^{k}+\tau
K(2\tilde{x}^{k+1}-\eta^{k})),\\
(x^{k+1}, y^{k+1},v^{k+1})=\rho_{k}(\tilde{x}^{k+1},
\tilde{y}^{k+1},\tilde{v}^{k+1})+(1-\rho_{k})(x^{k}, y^{k},v^{k}).
\end{array}
\right.
$$
~~~~~~~~~~~~~~~~~~~End for
\end{algorithmic}
\end{algorithm}

\begin{thm}
Let $\sigma>0$, $\gamma>0$, $\tau>0$ , $(\alpha_{k})_{k\in
\mathbb{N}}$ and the sequences $(\rho_{k})_{k\in \mathbb{N}}$, be
the parameters of Algorithms 1. Let  the following conditions
hold:\\
(i)  $\sigma\gamma+\sigma\tau\|K\|^{2}<1$,\\
(ii)   $(\alpha_{k})_{k\in \mathbb{N}}$ is nondecreasing with
$\alpha_{1} = 0$ and $0\leq\alpha_{k}\leq\alpha < 1$ for every
$k\geq1$ and $\rho, \theta, \hat{\delta}>0$ are such that
$\hat{\delta}>\frac{\alpha^{2}(1+\alpha)+\alpha\theta}{1-\alpha^{2}}$
and
$0<\rho\leq\rho_{k}<\frac{\hat{\delta}-\alpha[\alpha(1+\alpha)+\alpha\hat{\delta}+\theta]}{\hat{\delta}[1+\alpha(1+\alpha)+\alpha\hat{\delta}+\theta]}$
$\forall k\geq1$.\\
Let the sequences $(x^{k},y^{k},v^{k})$ be generated by Algorithms
1. Then the sequence $\{x_{k}\}$ converges to a solution of Problem
(1.4).
\end{thm}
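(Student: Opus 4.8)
The plan is to recognize Algorithm 1 as an instance of the inertial Krasnosel'skiĭ–Mann iteration of Lemma 2.2, applied to a suitably chosen nonexpansive operator $T$ acting on the product space $\mathcal{Z} := \mathcal{X}^m \times \mathcal{X}^m \times \mathcal{Y}^m$ equipped not with the canonical inner product but with the inner product induced by a symmetric positive-definite preconditioner $P$. Concretely, I would first set up the primal-dual optimality system for Problem (1.4): a point $(x,y,v)$ is a solution of the monotone inclusion
$$
0 \in \begin{pmatrix} \partial H(x) \\ \partial \bar G^{\ast}(y) \\ \partial F^{\ast}(v) \end{pmatrix} + \begin{pmatrix} 0 & I & K^{\ast} \\ -I & 0 & 0 \\ -K & 0 & 0 \end{pmatrix}\begin{pmatrix} x \\ y \\ v\end{pmatrix},
$$
whose $x$-component, under Assumption 2.1 and the Fenchel–Rockafellar duality (2.1), yields a minimizer of (1.4); I would verify that the set of such solutions is nonempty (from the attainment hypothesis and the qualification condition) and that it coincides with $\mathrm{Fix}(T)$ for the operator $T$ extracted from the non-inertial part of the iteration.

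Next I would define $P$ as the block matrix
$$
P = \begin{pmatrix} \tfrac{1}{\sigma}I & -I & -K^{\ast} \\ -I & \tfrac{1}{\gamma}I & 0 \\ -K & 0 & \tfrac{1}{\tau}I \end{pmatrix},
$$
and show that condition (i), $\sigma\gamma + \sigma\tau\|K\|^2 < 1$, is precisely what makes $P$ symmetric positive definite (e.g. by a Schur-complement argument: the $(2,2)$ and $(3,3)$ blocks are positive, and the Schur complement onto the first block is $\tfrac1\sigma I - \gamma I - \tau K^{\ast}K \succ 0$ iff $\sigma\gamma + \sigma\tau\|K\|^2<1$). With $P$ in hand, I would rewrite the three $\mathrm{prox}$-steps together with the final relaxation as
$$
w^{k+1} = w^k + \rho_k\big(T(\zeta^k) - \zeta^k\big), \qquad \zeta^k = w^k + \alpha_k(w^k - w^{k-1}),
$$
where $w^k=(x^k,y^k,v^k)$ and $T = (I + P^{-1}\mathcal{A})^{-1}(I - P^{-1}\mathcal{B})$ or an equivalent resolvent-of-$P$-monotone-operator form; the key algebraic fact to check is that evaluating $T$ on $\zeta^k$ reproduces exactly the lines defining $\tilde x^{k+1},\tilde y^{k+1},\tilde v^{k+1}$ (the over-relaxation $2\tilde x^{k+1}-\eta^k$ inside the $F^{\ast}$ step is the usual symptom that the splitting is a resolvent in the metric $P$). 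Standard preconditioned-proximal-point theory then gives that $T$ is firmly nonexpansive — hence nonexpansive — with respect to $\langle\cdot,\cdot\rangle_P = \langle P\cdot,\cdot\rangle$, and that $\mathrm{Fix}(T)$ equals the solution set of the inclusion above.

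Finally, I would invoke Lemma 2.2 in the Hilbert space $(\mathcal{Z}, \langle\cdot,\cdot\rangle_P)$: condition (ii) of the theorem is exactly the hypothesis on $(\alpha_k)$, $\rho$, $\theta$, $\hat\delta$, $(\rho_k)$ required there, and $\mathrm{Fix}(T)\neq\emptyset$ by the previous step, so Lemma 2.2 yields $\sum_k \|w^{k+1}-w^k\|_P^2 < \infty$ and weak convergence of $(w^k)$ to some $w^{\ast}=(x^{\ast},y^{\ast},v^{\ast})\in\mathrm{Fix}(T)$; projecting onto the first coordinate and using that $x^{\ast}$ solves the primal inclusion gives that $x^k\to x^{\ast}$ solves Problem (1.4). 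I expect the main obstacle to be the algebraic verification that the iteration really is the $P$-resolvent iteration for the correct operator — i.e. matching the three coupled $\mathrm{prox}$ lines (with their asymmetric cross terms $-\sigma\eta^k$, $-\sigma K^{\ast}\nu^k$, $+\gamma\xi^k$, $+\tau K(2\tilde x^{k+1}-\eta^k)$) to $(I+P^{-1}\mathcal A)^{-1}$ exactly, including checking that $\mathcal A$ is maximally monotone so that the resolvent is well defined and single-valued — together with the careful passage of Lemma 2.2 from the canonical metric to the $P$-metric (which only needs $P\succ 0$, so it is routine once positivity of $P$ is established from condition (i)). A secondary point requiring care is that Lemma 2.2 as stated needs $\tilde M$ affine; here $\mathcal{Z}$ itself works, so this is automatic, but if a constraint set such as $C$ were carried along one would need to check affineness.
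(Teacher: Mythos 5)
Your proposal follows essentially the same route as the paper: recast Algorithm 1 as the inertial Krasnosel'ski\u{\i}--Mann iteration of Lemma 2.2 applied to the resolvent $T=(I+P^{-1}A)^{-1}$ of the maximally monotone primal--dual operator $A$ in the metric induced by the same preconditioner $P$, with condition (i) guaranteeing $P\succ 0$ and condition (ii) matching the hypotheses of Lemma 2.2. Your added details (the Schur-complement check of positive definiteness and the explicit identification of $\mathrm{Fix}(T)$ with the primal--dual solution set via Assumption 2.1) only flesh out steps the paper asserts briefly, so the argument is the same in substance.
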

In the following, we would like to extend the IPDFP to solve the
 optimization problem (1.3).

\begin{algorithm}[H]
\caption{A splitting inertial primal-dual fixed point
algorithm(SIPDFP).}
\begin{algorithmic}\label{1}
\STATE Initialization: Choose $x^{0}, x^{1},y^{0}, y^{1}\in \mathcal
{X}$, $
 v^{0}_{1}, v^{1}_{1}\in \mathcal{Y}_{1},\cdots,v^{0}_{m}, v^{1}_{m}\in \mathcal{Y}_{m}$, relaxation\\ ~~~~~~~~~~~~~~~~~~~parameters
$(\rho_{k})_{k\in \mathbb{N}}$, extrapolation parameter $\alpha_{k}$
and proximal\\~~~~~~~~~~~~~~~~~~~ parameters $\sigma>0$,$\gamma>0$,
$\tau>0$.\\ Iterations:~~~~ for every  $k\geq0$
$$
\left\{
\begin{array}{l}
\xi^{k}=x^{k}+\alpha_{k}(x^{k}-x^{k-1}),\\
\tilde{x}^{k+1}=\xi^{k}-\sigma \eta^{k}-\frac{\sigma}{m} \sum_{i=1}^{m}K^{\ast}_{i}\nu^{k}_{i},\\
x^{k+1}=\rho_{k}\tilde{x}^{k+1}+(1-\rho_{k})x^{k},\\
\eta^{k}=y^{k}+\alpha_{k}(y^{k}-y^{k-1}),\\
\tilde{y}^{k+1}=prox_{\gamma G^{\ast}}(\eta^{k}+\gamma \xi^{k}),\\
y^{k+1}=\rho_{k}\tilde{y}^{k+1}+(1-\rho_{k})y^{k},\\
\nu_{i}^{k}=v^{k}_{i}+\alpha_{k}(v^{k}_{i}-v^{k-1}_{_{i}}),i=1,\cdots,m,\\
\tilde{v}^{k+1}_{i}=prox_{\tau F_{i}^{\ast}}(\nu^{k}_{i}+\tau
K_{i}(2\tilde{x}^{k+1}-\eta^{k})),i=1,\cdots,m,\\
v^{k+1}_{i}=\rho_{k}\tilde{v}^{k+1}_{i}+(1-\rho_{k})v^{k}_{_{i}},i=1,\cdots,m.
\end{array}
\right.
$$
~~~~~~~~~~~~~~~~~~~End for
\end{algorithmic}
\end{algorithm}

\begin{thm}
Let $\sigma>0$, $\gamma>0$, $\tau>0$ , $(\alpha_{k})_{k\in
\mathbb{N}}$ and the sequences $(\rho_{k})_{k\in \mathbb{N}}$, be
the parameters of Algorithms 2. Let the following conditions
hold:\\
(i)  $\sigma\gamma+\sigma\tau\sum_{i=1}^{m}\|K_{i}\|^{2}<1$,\\
(ii)   $(\alpha_{k})_{k\in \mathbb{N}}$ is nondecreasing with
$\alpha_{1} = 0$ and $0\leq\alpha_{k}\leq\alpha < 1$ for every
$k\geq1$ and $\rho, \theta, \hat{\delta}>0$ are such that
$\hat{\delta}>\frac{\alpha^{2}(1+\alpha)+\alpha\theta}{1-\alpha^{2}}$
and
$0<\rho\leq\rho_{k}<\frac{\hat{\delta}-\alpha[\alpha(1+\alpha)+\alpha\hat{\delta}+\theta]}{\hat{\delta}[1+\alpha(1+\alpha)+\alpha\hat{\delta}+\theta]}$
$\forall k\geq1$.\\
Let the sequences $(x^{k},y^{k},v^{k})$ be generated by Algorithms
2. Then the sequence $\{x_{k}\}$ converges to a solution of Problem
(1.3).
\end{thm}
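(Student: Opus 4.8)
The proof proceeds by exhibiting Algorithm 2 (SIPDFP) as the realization of Algorithm 1 (IPDFP) applied to Problem (1.3), and then invoking Theorem 3.1; this is the ``splitting'' relationship announced in the introduction. First I would make the identification of (1.3) with the general Problem (1.4) precise: take the product Hilbert space $\mathcal{X}^{m}$, the indicator $H=\delta_{C}$ of the consensus subspace $C=\{x:x_{1}=\cdots=x_{m}\}$, the separable functions $\bar{G}(x)=\sum_{i=1}^{m}\frac{1}{m}G(x_{i})$ and $F(Kx)=\sum_{i=1}^{m}F_{i}(K_{i}x_{i})$, and the block-diagonal operator $Kx=(K_{1}x_{1},\dots,K_{m}x_{m})$ with adjoint $K^{\ast}(v_{1},\dots,v_{m})=(K_{1}^{\ast}v_{1},\dots,K_{m}^{\ast}v_{m})$. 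Then I would record the three proximal steps of IPDFP for these data: $prox_{\sigma H}=prox_{\sigma\delta_{C}}=proj_{C}$, which sends each block of its argument to the average $\frac{1}{m}\sum_{j}(\cdot)_{j}$; $prox_{\tau F^{\ast}}$ acts coordinatewise as $prox_{\tau F_{i}^{\ast}}$ because $F^{\ast}=\sum_{i}F_{i}^{\ast}$; and $prox_{\gamma\bar{G}^{\ast}}$ likewise acts coordinatewise, the $\frac{1}{m}$ weights being absorbed into the conjugate so that on the diagonal of $\mathcal{X}^{m}$ it reduces to $prox_{\gamma G^{\ast}}$.

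The key structural fact is that the consensus subspace $C$ is invariant along the IPDFP recursion for this data. If the initial iterates $x^{0},x^{1},y^{0},y^{1}$ are taken in $C$ -- which is exactly what the $\mathcal{X}$-valued initialization of Algorithm 2 amounts to, and is a legitimate instance of Algorithm 1 to which Theorem 3.1 applies -- then the step $\tilde x^{k+1}=proj_{C}(\dots)$ lands in $C$ by definition, the inertial and relaxation steps are linear combinations of elements of $C$ and hence stay in $C$, and the $prox_{\gamma\bar{G}^{\ast}}$ step preserves equality of blocks because $\bar{G}^{\ast}$ is a sum of identical functions of the blocks. Thus by induction $x^{k},\tilde x^{k},y^{k},\tilde y^{k}$ all remain in $C$, so they are determined by single elements $\hat x^{k},\hat y^{k}\in\mathcal{X}$, while $v^{k}=(v_{1}^{k},\dots,v_{m}^{k})$ is unconstrained. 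Substituting $x^{k}=(\hat x^{k},\dots,\hat x^{k})$ and $y^{k}=(\hat y^{k},\dots,\hat y^{k})$ into the IPDFP updates and using the computations above, the $\tilde x$-step loses the projection and becomes $\tilde x^{k+1}=\xi^{k}-\sigma\eta^{k}-\frac{\sigma}{m}\sum_{i=1}^{m}K_{i}^{\ast}\nu_{i}^{k}$ (the averaging in $proj_{C}$ is what produces the factor $\frac{1}{m}$ in front of $\sum_{i}K_{i}^{\ast}\nu_{i}^{k}$), the $\tilde y$-step becomes $prox_{\gamma G^{\ast}}(\eta^{k}+\gamma\xi^{k})$, and the $\tilde v$-step decouples into the $m$ updates $prox_{\tau F_{i}^{\ast}}(\nu_{i}^{k}+\tau K_{i}(2\tilde x^{k+1}-\eta^{k}))$ -- precisely the iteration of Algorithm 2. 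It then remains to match the hypotheses: condition (ii) is word for word the same in the two theorems, and condition (i) of Theorem 3.1, $\sigma\gamma+\sigma\tau\|K\|^{2}<1$, is what becomes condition (i) of Theorem 3.2, $\sigma\gamma+\sigma\tau\sum_{i=1}^{m}\|K_{i}\|^{2}<1$, once the operator norm of the block operator that enters the fixed-point estimate is expressed in terms of the $\|K_{i}\|$.

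With the two algorithms identified and their hypotheses matched, Theorem 3.1 gives convergence of $(x^{k})_{k}$, i.e.\ of $((\hat x^{k},\dots,\hat x^{k}))_{k}$, to a solution $(\hat x^{\star},\dots,\hat x^{\star})$ of Problem (1.4) $=$ Problem (1.3); and since Problems (1.1), (1.2) and (1.3) are equivalent -- their solutions being in bijection under the consensus identification, as set up in the introduction -- the limit $\hat x^{\star}$ of the sequence produced by Algorithm 2 is a solution of Problem (1.1). I expect the main obstacle to lie precisely in the bookkeeping of the reduction: checking carefully that $prox_{\gamma\bar{G}^{\ast}}$ preserves the diagonal and collapses there to $prox_{\gamma G^{\ast}}$, keeping the $\frac{1}{m}$ normalizations coming from the weights in $\bar{G}$ and from the averaging in $proj_{C}$ consistent throughout, and identifying which operator norm appears in the step-size restriction of Theorem 3.1 so that its condition (i) is genuinely implied by the $\sum_{i}\|K_{i}\|^{2}$ condition of Theorem 3.2. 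A secondary caveat, inherited from Theorem 3.1, is that ``converges'' here should be read as weak convergence in the general Hilbert-space setting, which would come from Lemma 2.2(ii) together with a demiclosedness argument for the primal--dual fixed-point operator.
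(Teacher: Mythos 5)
Your proposal follows essentially the same route as the paper: its (very terse) proof of Theorem 3.2 likewise realizes Algorithm 2 as Algorithm 1 applied to Problem (1.3) with $H=\delta_{C}$, using the blockwise action of $prox_{\tau F^{\ast}}$ and $K^{\ast}$, the averaging formula for $proj_{C}$ (which produces the $\frac{\sigma}{m}\sum_{i=1}^{m}K_{i}^{\ast}\nu_{i}^{k}$ term), and the comparison of $\|K\|^{2}$ with $\sum_{i=1}^{m}\|K_{i}\|^{2}$, before invoking Theorem 3.1. Your write-up is in fact more detailed than the paper's, which does not spell out the consensus-invariance induction or the $\bar{G}^{\ast}$-versus-$G^{\ast}$ (factor $\frac{1}{m}$) bookkeeping that you rightly flag as the delicate points of the reduction.
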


\subsection{Proofs of convergence}
Proof of Theorem 3.1 for Algorithm 1. By the idea of Laurent
Condat[1], we know that Algorithm 1 has the structure of a
forward-backward iteration, when expressed in terms of nonexpansive
operators on $\mathcal {Z} :=  \mathcal {X}\times\mathcal
{X}\times\mathcal {Y}$, equipped with a particular inner product.
\par
Let the inner product $\langle\cdot,\cdot\rangle_{I}$ in $\mathcal
{Z}$ be defined as
$$\langle z,z'\rangle:=\langle x,x'\rangle+\langle y,y'\rangle+\langle v,v'\rangle,~~~~~\forall z=(x,y,v),~ z'=(x',y',v')\in\mathcal {Z}.$$
By endowing $\mathcal {Z}$ with this inner product, we obtain the
Euclidean space denoted by $\mathcal {Z}_{I}$ . Let us define the
bounded linear operator on $\mathcal {Z}$,

$$
P:= \left(
  \begin{array}{ccccccc}
    x  \\
    y  \\
    v  \\

  \end{array}
\right)\mapsto\left(
  \begin{array}{ccccccc}
    \frac{1}{\sigma} & -I  & -K^{\ast}\\
    -I & \frac{1}{\gamma}  &  0\\
    -K & 0  & \frac{1}{\tau}\\

  \end{array}
\right)\left(
  \begin{array}{ccccccc}
    x  \\
    y  \\
    v  \\

  \end{array}
\right).\eqno{(3.3)}
$$
From the condition (i), we can easily check that $P$ is positive
definite. Hence, we can define another inner product
$\langle\cdot,\cdot\rangle_{P}$ and norm
$\|\cdot\|_{P}=\langle\cdot,\cdot\rangle_{P}^{\frac{1}{2}}$ in
$\mathcal {Z}$ as
$$\langle z,z'\rangle_{P}=\langle z,Pz'\rangle_{I}.\eqno{(3.4)}$$
We denote by $\mathcal {Z}_{P}$ the corresponding Euclidean space.
\par
For every $k\in\mathbb{N}$, the following inclusion is satisfied by
$\tilde{z}^{k+1} := (\tilde{x}^{k+1},
\tilde{y}^{k+1},\tilde{v}^{k+1})$ computed by Algorithms 1:

 $$ 0\in \left(
  \begin{array}{ccccccc}
    \partial H &I & K^{\ast} \\
    -I & \partial G^{\ast} &0\\
    -K &0  & \partial F^{\ast} \\

  \end{array}
\right)\left(
  \begin{array}{ccccccc}
    \tilde{x}^{k+1} \\
   \tilde{y}^{k+1} \\
    \tilde{v}^{k+1}\\

  \end{array}
\right)+\left(
  \begin{array}{ccccccc}
   \frac{ 1}{\sigma} &-I & -K^{\ast} \\
    I & \frac{1}{\gamma} &0\\
    -K &0  & \frac{1}{\tau} \\

  \end{array}
\right)\left(
  \begin{array}{ccccccc}
    \tilde{x}^{k+1}-\xi^{k} \\
   \tilde{y}^{k+1} -\eta^{k}\\
    \tilde{v}^{k+1}-\nu^{k}\\

  \end{array}
\right)
$$
By set $$\varpi^{k}=(\xi^{k},\eta^{k},\nu^{k}),
 A:= \left(
  \begin{array}{ccccccc}
    \partial H &I & K^{\ast} \\
    -I & \partial G^{\ast} &0\\
    -K &0  & \partial F^{\ast} \\

  \end{array}
\right),
$$
 it also can be written as follows:
$$
\left\{
\begin{array}{l}
\varpi^{k}=z^{k}+\alpha_{k}(z^{k}-z^{k-1}),\\
\tilde{z}^{k+1}:=(I+P^{-1}\circ A)^{-1}(\varpi^{k}).
\end{array}
\right.\eqno{(3.5)}
$$
Considering  the relaxation step, we obtain
$$
\left\{
\begin{array}{l}
\varpi^{k}=z^{k}+\alpha_{k}(z^{k}-z^{k-1}),\\
\tilde{z}^{k+1}:=(I+P^{-1}\circ A)^{-1}(\varpi^{k}),\\
z^{k+1}:=\rho_{k}(I+P^{-1}\circ
A)^{-1}(\varpi^{k})+(1-\rho_{k})\varpi^{k}.
\end{array}
\right.\eqno{(3.6)}
$$

Set $M=P^{-1}\circ A$, then
\par
The operator $(x,y,v)\mapsto\partial H\times\partial G^{\ast}
\times\partial F^{\ast}$ is maximally monotone in $\mathcal {Z}_{I}$
by Propositions 23.16 of [3]. Moreover, the skew operator:
$$
\left(
  \begin{array}{ccccccc}
    x \\
    y\\
    v\\

  \end{array}
\right)\mapsto \left(
  \begin{array}{ccccccc}
    0 &I & K^{\ast} \\
    -I &  0 &0\\
    -K &0  &0 \\

  \end{array}
\right)\left(
  \begin{array}{ccccccc}
    x  \\
    y  \\
    v  \\

  \end{array}
\right)
$$
is maximally monotone in $\mathcal {Z}_{I}$ [3, Example 20.30] and
has full domain. Hence, $A$ is maximally monotone [3, Corollary
24.4(i)]. Thus, $M$ is monotone in $\mathcal {Z}_{P}$ and, from the
injectivity of $P$ , $M$ is maximally monotone in $\mathcal {Z}_{P}$
. Set $T=(I+M)^{-1}$, then by [3, Corollary 23.8], we know that
$T\in\mathcal {A}(\mathcal {Z}_{P},\frac{1}{2})$.
 In particular, it is non-expansive. Since $P^{-1}$ and $L$ are bounded and the norms $\|\cdot\|_{I}$ and $\|\cdot\|_{P}$
are equivalent, so from  conditions (i)-(ii)  and Lemma 2.2 we have
that the  iterative scheme defined by (3.6)
satisfies the following statements:\\
(i) $\sum_{k\in\mathbb{N}}\|z^{k+1}-z^{k}\|_{P}^{2}<+\infty$;\\
(ii) $(z^{k})_{k\in\mathbb{N}}$ converges to a point in $Fix(T)$.\\
Then the sequence $\{x^{k}\}$ converges to a solution of Problem
(1.4).

 Elaborating on Theorem 3.1,
 we are now ready to establish the
Theorem 3.2.

By the notation in Section 1, we know that, for any
$\textbf{y}=(y_{1},\cdots,y_{m})\in\mathcal
{Y}_{1}\times\cdots\times \mathcal {Y}_{m}$,
$F^{\ast}(y_{1},\cdots,y_{m})=(F_{1}^{\ast}y_{1},\cdots,F_{m}^{\ast}y_{m})$,
$K^{\ast}(y_{1},\cdots,y_{m})=(K^{\ast}_{1}y_{1},\cdots,K^{\ast}_{m}y_{m})$,
$prox_{\tau F^{\ast}}=(prox_{\tau
F^{\ast}_{1}}(y_{1}),\cdots,prox_{\tau
F^{\ast}_{m}}(y_{m}))$,$\|K\|^{2}=\|\Sigma_{i=1}^{m}K_{i}^{\ast}K_{i}\|$.
When $H(x)=\delta_{C}(x)$, where $C$ is the space of vectors
$x\in\mathcal{X}^{m}$, we know that for any $x\in\mathcal{X}^{m}$,
$proj_{C}(x)=(\bar{x},\cdots,\bar{x})$ where $\bar{x}$ is the
average of vector $x$, i.e., $\bar{x}=m^{-1}\sum_{i=1}^{m}x_{i}$.
Consequently, the components of $\tilde{x}^{k+1}$ in  Algorithm 1
are equal and coincide with $\xi^{k}-\sigma
\eta^{k}-\frac{\sigma}{m} \sum_{i=1}^{m}K^{\ast}_{i}\nu^{k}_{i}$.
Therefore, we can obtain Algorithm 2 by Algorithm 1, and we can
obtain the convergence of Theorem 3.2 directly  by Theorem 3.1.

\subsection{Connections to other algorithms}
    We will further establish the connections to other existing
methods.\\
\textbf{Primal-dual algorithms} \\
If the term $\bar{G}$ is absent of the Problem (1.4), and
$\alpha_{k}\equiv0$ , the Algorithms 1 boils down to the primal-dual
algorithms of Chambolle and Pock [12], which have been proposed in
other forms in [13, 14].\\
\textbf{Forward-backward splitting} \\
If the term $F\circ K=0$,  $\bar{G}$ is differentiable  and its
gradient $\nabla\bar{G}$  is Lipschitz continuous, $\alpha_{k}=0$ in
Algorithms 1. We obtain exactly the popular forward-backward
splitting algorithm  for minimizing the sum of a smooth and a
non-smooth convex function. See [15,16].\\

\section{Preconditioning}
\subsection{Convergence of the Preconditioned algorithm}
In the context of saddle point problems, Pock and Chambolle [17]
proposed a preconditioning of the form

$$ B:= \left(
  \begin{array}{ccccccc}
    \tilde{T}^{-1} & -K^{\ast} \\
   - K & \Sigma^{-1} \\

  \end{array}
\right)
$$

where $\tilde{T}$ and $\Sigma$ are selfadjoint, positive definite
maps. A condition for the positive definiteness of $P$ follows from
the following Lemma.
\begin{lem}
([14]). Let $A_{1}$, $A_{2}$ be symmetric positive definite maps and
$M$ a bounded operator. If
$\|A_{2}^{-\frac{1}{2}}MA_{1}^{-\frac{1}{2}} \| < 1$, then
$$ A:= \left(
  \begin{array}{ccccccc}
    A_{1} & M^{\ast} \\
    M & A_{2} \\

  \end{array}
\right)
$$

is positive definite.
\end{lem}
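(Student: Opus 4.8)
The plan is to exhibit $A$ as a congruence transform of a manifestly positive definite block operator whose diagonal blocks are the identity, and thereby reduce the problem to the scalar inequality $2\|u\|\,\|v\|\le\|u\|^2+\|v\|^2$. Since $A_1$ and $A_2$ are symmetric positive definite, their square roots $A_1^{1/2},A_2^{1/2}$ and the inverses $A_1^{-1/2},A_2^{-1/2}$ are well defined, bounded, self-adjoint and positive definite. Put $N:=A_2^{-1/2}MA_1^{-1/2}$, so that $\|N\|<1$ by hypothesis and $N^{\ast}=A_1^{-1/2}M^{\ast}A_2^{-1/2}$. First I would record the factorization
$$A=\begin{pmatrix} A_1^{1/2} & 0 \\ 0 & A_2^{1/2}\end{pmatrix}\begin{pmatrix} I & N^{\ast} \\ N & I\end{pmatrix}\begin{pmatrix} A_1^{1/2} & 0 \\ 0 & A_2^{1/2}\end{pmatrix},$$
which is verified by multiplying out and using $A_1^{1/2}N^{\ast}A_2^{1/2}=A_1^{1/2}\bigl(A_1^{-1/2}M^{\ast}A_2^{-1/2}\bigr)A_2^{1/2}=M^{\ast}$ together with the analogous identity $A_2^{1/2}NA_1^{1/2}=M$.

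Next I would show that the middle factor $R:=\begin{pmatrix} I & N^{\ast} \\ N & I\end{pmatrix}$ is positive definite, and in fact bounded below by $(1-\|N\|)I$. For any $(u,v)\neq(0,0)$,
$$\left\langle\begin{pmatrix} u \\ v\end{pmatrix},R\begin{pmatrix} u \\ v\end{pmatrix}\right\rangle=\|u\|^2+2\langle Nu,v\rangle+\|v\|^2\ge\|u\|^2-2\|N\|\,\|u\|\,\|v\|+\|v\|^2\ge(1-\|N\|)\bigl(\|u\|^2+\|v\|^2\bigr)>0,$$
where the last two inequalities use the Cauchy--Schwarz estimate $|\langle Nu,v\rangle|\le\|N\|\,\|u\|\,\|v\|$, the elementary bound $2\|u\|\,\|v\|\le\|u\|^2+\|v\|^2$, and $\|N\|<1$. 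Finally, since $D:=\mathrm{diag}\bigl(A_1^{1/2},A_2^{1/2}\bigr)$ is bounded, self-adjoint and boundedly invertible, the congruence $A=DRD$ transfers positivity: for $z=(x,y)\neq0$ one has $Dz\neq0$, hence $\langle z,Az\rangle=\langle Dz,RDz\rangle\ge(1-\|N\|)\|Dz\|^2>0$; moreover $\|Dz\|^2=\langle x,A_1x\rangle+\langle y,A_2y\rangle\ge\lambda\|z\|^2$ for a common spectral lower bound $\lambda>0$ of $A_1,A_2$, which yields the uniform estimate $\langle z,Az\rangle\ge(1-\|N\|)\lambda\|z\|^2$ characterising positive definiteness of $A$.

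I do not expect a genuine obstacle here: the argument is a one-line congruence plus the scalar inequality above. The only points meriting care are (a) the existence and boundedness of the operator square roots and their inverses, which is immediate once $A_1,A_2$ are assumed positive definite (hence boundedly invertible), and (b) the intended meaning of ``positive definite'' for operators on an infinite-dimensional space, namely boundedness below rather than merely a strictly positive quadratic form — this is exactly what the final estimate $\langle z,Az\rangle\ge(1-\|N\|)\lambda\|z\|^2$ delivers, and in the finite-dimensional (``Euclidean space'') setting used throughout the paper there is nothing to check. An entirely equivalent route is the Schur complement: $A_2-MA_1^{-1}M^{\ast}=A_2^{1/2}(I-NN^{\ast})A_2^{1/2}$ is positive definite because $\|NN^{\ast}\|=\|N\|^2<1$, and $A_1$ is positive definite, so the standard block criterion applies.
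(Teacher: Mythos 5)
Your proof is correct. Note, though, that the paper never proves this lemma itself: it is imported from reference [14], and the closest argument the authors actually write out is their proof of the three-block analogue (Lemma 4.2 for the operator $\bar P$ in (4.1)), which expands the quadratic form and controls the off-diagonal term $-2\langle y+K^{\ast}v,x\rangle$ by Cauchy--Schwarz together with Young's inequality with a free parameter $\varepsilon$, measured in norms weighted by $\Sigma$ and by the block-diagonal map $\mathrm{diag}(\Upsilon,\tilde T)$. Your route is genuinely different: the congruence $A=DRD$ with $D=\mathrm{diag}(A_1^{1/2},A_2^{1/2})$ and $N=A_2^{-1/2}MA_1^{-1/2}$ reduces the claim to the elementary bound $R\ge(1-\|N\|)I$ for the middle factor with identity diagonal, and transferring through the self-adjoint, boundedly invertible $D$ yields the explicit coercivity estimate $\langle z,Az\rangle\ge(1-\|N\|)\lambda\|z\|^2$ with no parameter to tune (the $\varepsilon$-argument implicitly requires a choice $1<\varepsilon<\|\Sigma^{1/2}DM^{1/2}\|^{-2}$ that the paper never makes explicit). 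The Schur-complement variant you sketch is equally valid, since $A_2-MA_1^{-1}M^{\ast}=A_2^{1/2}(I-NN^{\ast})A_2^{1/2}$ and $\|NN^{\ast}\|=\|N\|^2<1$. What your approach buys is a quantitative lower bound and an argument that extends verbatim to the three-block preconditioner $\bar P$ (take the off-diagonal block to be the row operator $(I,\,K^{\ast})$ against $\mathrm{diag}(\Upsilon,\tilde T)$), so it could serve as a self-contained replacement for both Lemma 4.1 and Lemma 4.2; your closing distinction between a strictly positive quadratic form and bounded-below positive definiteness in infinite dimensions is also a point the paper glosses over.
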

Based on the idea of Pock and Chambolle, we present a
preconditioning of the form
$$
\bar{P}:= \left(
  \begin{array}{ccccccc}
    \Sigma^{-1} & -I  & -K^{\ast}\\
    -I & \Upsilon^{-1}  &  0\\
    -K & 0  & \tilde{T}^{-1}\\

  \end{array}
\right),\eqno{(4.1)}
$$
where $\Sigma$, $\Upsilon$ and $\tilde{T}$ are selfadjoint, positive
definite maps. A condition for the positive definiteness of
$\bar{P}$ follows from the following Lemma.
\begin{lem}
 Let $\Sigma$, $\Upsilon$ and $\tilde{T}$ be symmetric positive definite maps and
$\bar{P}$ a bounded operator. If
$\|\Sigma^{\frac{1}{2}}\Upsilon^{\frac{1}{2}}\|^{2}
+\|\Sigma^{\frac{1}{2}}K^{\ast}\tilde{T}^{\frac{1}{2}}\|^{2}<1$,
then the matrix $\bar{P}$ defined in (4.1) is symmetric and positive
definite .
\end{lem}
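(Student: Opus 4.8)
The plan is to reduce Lemma 4.2 to Lemma 4.1 by a block-permutation and a suitable grouping of the three diagonal blocks. The matrix $\bar{P}$ in (4.1) acts on $\mathcal{Z} = \mathcal{X}\times\mathcal{X}\times\mathcal{Y}$; I would single out the $(x,v)$-block versus the $y$-component and write $\bar{P}$, after reordering coordinates as $(x,v,y)$, in the $2\times 2$ block form
$$
\bar{P} \;\cong\; \begin{pmatrix} A_{1} & M^{\ast} \\ M & A_{2}\end{pmatrix},
\qquad
A_{1} = \begin{pmatrix} \Sigma^{-1} & -K^{\ast} \\ -K & \tilde{T}^{-1}\end{pmatrix},
\quad A_{2} = \Upsilon^{-1},
\quad M^{\ast} = \begin{pmatrix} -I \\ 0 \end{pmatrix}.
$$
First I would observe that $A_{1}$ is itself symmetric positive definite: this is exactly Lemma 4.1 applied to $\Sigma^{-1}$, $\tilde{T}^{-1}$ and the operator $-K$, whose relevant norm is $\|\tilde{T}^{1/2}K\Sigma^{1/2}\| = \|\Sigma^{1/2}K^{\ast}\tilde{T}^{1/2}\|$, and this is $<1$ by hypothesis (since the sum of two nonnegative squares is $<1$, each term is $<1$). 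Likewise $A_{2} = \Upsilon^{-1}$ is symmetric positive definite by assumption, and $\bar{P}$ is visibly symmetric because each diagonal block is and the off-diagonal blocks are genuine adjoints of one another.

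Next I would apply Lemma 4.1 a second time, now to the pair $(A_{1}, A_{2})$ and the operator $M$, which requires the bound $\|A_{2}^{-1/2} M A_{1}^{-1/2}\| < 1$, i.e. $\|\Upsilon^{1/2}\,M\,A_{1}^{-1/2}\| < 1$. Since $M = (-I,\,0)$ as a map $\mathcal{X}\times\mathcal{Y}\to\mathcal{X}$, we have $M A_{1}^{-1/2} = $ (minus) the first block-row of $A_{1}^{-1/2}$, so the task is to estimate the norm of $\Upsilon^{1/2}$ times that block-row. The clean way is to avoid computing $A_{1}^{-1/2}$ explicitly and instead bound $\|A_{2}^{-1/2}MA_{1}^{-1/2}\|^{2} = \|A_{2}^{-1/2}M A_{1}^{-1} M^{\ast} A_{2}^{-1/2}\|$, and here $M A_{1}^{-1} M^{\ast}$ is just the top-left $\mathcal{X}\to\mathcal{X}$ block of $A_{1}^{-1}$. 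Using the Schur-complement formula for the inverse of $A_{1}$, that block equals $(\Sigma^{-1} - K^{\ast}\tilde{T}K)^{-1}$, so the condition becomes $\|\Upsilon^{1/2}(\Sigma^{-1} - K^{\ast}\tilde{T}K)^{-1}\Upsilon^{1/2}\| < 1$, equivalently $\Sigma^{-1} - K^{\ast}\tilde{T}K \succ \Upsilon$ in the Loewner order, i.e. $\Sigma^{-1} \succ \Upsilon + K^{\ast}\tilde{T}K$. I would then verify this operator inequality from the scalar hypothesis $\|\Sigma^{1/2}\Upsilon^{1/2}\|^{2} + \|\Sigma^{1/2}K^{\ast}\tilde{T}^{1/2}\|^{2} < 1$: conjugating by $\Sigma^{1/2}$, the desired inequality is $I \succ \Sigma^{1/2}\Upsilon\Sigma^{1/2} + \Sigma^{1/2}K^{\ast}\tilde{T}K\Sigma^{1/2}$, and each summand on the right has norm equal to $\|\Sigma^{1/2}\Upsilon^{1/2}\|^{2}$ and $\|\tilde{T}^{1/2}K\Sigma^{1/2}\|^{2} = \|\Sigma^{1/2}K^{\ast}\tilde{T}^{1/2}\|^{2}$ respectively, so their sum has norm at most the sum of the two, which is $<1$. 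Hence the right-hand side is $\prec I$, giving the claim; Lemma 4.1 then yields that $\bar{P}$ is positive definite.

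The main obstacle is the bookkeeping in the second application of Lemma 4.1: one must resist the temptation to compute $A_{1}^{-1/2}$ and instead pass through $MA_{1}^{-1}M^{\ast}$ and the Schur complement, and one must be careful that the relevant positive-definiteness of the Schur complement $\Sigma^{-1} - K^{\ast}\tilde{T}K$ is already guaranteed by the first step (it is the Schur complement of the positive-definite $A_{1}$, hence positive definite), so that $(\Sigma^{-1}-K^{\ast}\tilde{T}K)^{-1}$ and its square roots make sense. Apart from that, everything is a routine manipulation of operator inequalities and adjoints; symmetry of $\bar{P}$ is immediate from the block structure.
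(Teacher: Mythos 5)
Your proposal is correct, but it takes a genuinely different route from the paper. The paper never actually re-uses Lemma 4.1 in its proof of Lemma 4.2: it estimates the quadratic form $\bigl\langle (x,y,v),\bar{P}(x,y,v)\bigr\rangle$ directly, grouping the dual variables as $u=(y,v)$, writing the cross term as $-2\langle Du,x\rangle$ with $D=(I,K^{\ast})$ and $M=\mathrm{diag}(\Upsilon,\tilde T)$, and bounding it by Cauchy--Schwarz plus Young's inequality with a parameter $\varepsilon$; positivity then follows because $\|\Sigma^{1/2}DM^{1/2}\|^{2}$ is identified with the hypothesis quantity $\|\Sigma^{1/2}\Upsilon^{1/2}\|^{2}+\|\Sigma^{1/2}K^{\ast}\tilde T^{1/2}\|^{2}<1$ and $\varepsilon$ is chosen in $(1,\,1/\|\Sigma^{1/2}DM^{1/2}\|^{2})$. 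In other words, the paper in effect reproves (a three-block version of) Lemma 4.1 from scratch, in the style of Pock and Chambolle, and this direct estimate has the side benefit that the quantity $\|\Sigma^{1/2}DM^{1/2}\|^{2}$ reappears verbatim in Lemma 4.3 on diagonal preconditioning. You instead treat Lemma 4.1 as a black box and apply it twice after reordering the blocks as $(x,v,y)$: first to get positive definiteness of the $(x,v)$-block (using $\|\Sigma^{1/2}K^{\ast}\tilde T^{1/2}\|<1$), then to the pair consisting of that block and $\Upsilon^{-1}$, where the needed norm bound is converted via $\|B\|^{2}=\|BB^{\ast}\|$ and the Schur complement into the operator inequality $\Sigma^{-1}\succ\Upsilon+K^{\ast}\tilde TK$, which you correctly deduce from the scalar hypothesis by conjugating with $\Sigma^{1/2}$ and using $\|\Sigma^{1/2}\Upsilon\Sigma^{1/2}\|=\|\Sigma^{1/2}\Upsilon^{1/2}\|^{2}$ and $\|\Sigma^{1/2}K^{\ast}\tilde TK\Sigma^{1/2}\|=\|\Sigma^{1/2}K^{\ast}\tilde T^{1/2}\|^{2}$. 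Your argument buys modularity (Lemma 4.1 does the positivity work, and the Schur-complement identity makes the structure transparent) at the price of more bookkeeping and of strictness/invertibility care in the Loewner-order steps, which is harmless in the finite-dimensional setting the paper works in; the paper's one-shot estimate is shorter, self-contained, and yields the exact constant used later, but it hides the nested two-by-two structure your decomposition makes explicit.
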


\begin{proof}

Due to the structure of $\bar{P}$, we have that
$$
\langle\left(
  \begin{array}{ccccccc}
    x \\
    y\\
    v\\

  \end{array}
\right), \bar{P}\left(
  \begin{array}{ccccccc}
    x \\
    y\\
    v\\

  \end{array}
\right)\rangle=\langle x,\Sigma^{-1}x\rangle+\langle
y,\Upsilon^{-1}y\rangle+\langle v,\tilde{T}^{-1}v\rangle-2\langle
y+K^{\ast}v,x\rangle,
$$
set $$D=(I,K^{\ast}), u=(y,v)^{T},
 M:= \left(
  \begin{array}{ccccccc}
    \Upsilon & 0 \\
    0 & \tilde{T} \\

  \end{array}
\right)
$$
 then estimate the
middle term from below by Cauchy- Schwarz and Young＊s inequality
and get for every $\varepsilon > 0$ that
\begin{align*}
-2\langle y+K^{\ast}v,x\rangle&=-2\langle
Du,x\rangle=-2\langle\Sigma^{\frac{1}{2}}DM^{\frac{1}{2}}M^{-\frac{1}{2}}u,\Sigma^{-\frac{1}{2}}x\rangle\\
&\geq-2\|\Sigma^{\frac{1}{2}}DM^{\frac{1}{2}}M^{-\frac{1}{2}}u\|\|\Sigma^{-\frac{1}{2}}x\|\\
&\geq-(\varepsilon\|\Sigma^{\frac{1}{2}}DM^{\frac{1}{2}}\|^{2}\|u\|^{2}_{M^{-\frac{1}{2}}}+\frac{1}{\varepsilon}\|x\|^{2}_{\Sigma^{-\frac{1}{2}}}).
\end{align*}
Since
$\|\Sigma^{\frac{1}{2}}DM^{\frac{1}{2}}\|^{2}=\|\Sigma^{\frac{1}{2}}\Upsilon^{\frac{1}{2}}\|^{2}
+\|\Sigma^{\frac{1}{2}}K^{\ast}\tilde{T}^{\frac{1}{2}}\|^{2}<1$, so
we have
$$
\langle\left(
  \begin{array}{ccccccc}
    x \\
    y\\
    v\\

  \end{array}
\right), \bar{P}\left(
  \begin{array}{ccccccc}
    x \\
    y\\
    v\\

  \end{array}
\right)\rangle\geq(1-\varepsilon\|\Sigma^{\frac{1}{2}}DM^{\frac{1}{2}}\|^{2})\|u\|^{2}_{M^{-\frac{1}{2}}}+(1-\frac{1}{\varepsilon})\|x\|^{2}_{\Sigma^{-\frac{1}{2}}}>0.
$$

\end{proof}

 Now, we study preconditioning
techniques for the inertial primal-dual fixed point
algorithm(IPDFP), then we obtain the following algorithm.
\begin{algorithm}[H]
\caption{An inertial primal-dual fixed point algorithm with
preconditioning (IPDFP$^{2}$).}
\begin{algorithmic}\label{1}
\STATE Initialization: Choose $x^{0}, x^{1},y^{0}, y^{1}\in \mathcal
{X}$, $
 v^{0}, v^{1}\in \mathcal{Y}$, relaxation parameters
$(\rho_{k})_{k\in \mathbb{N}}$,\\ ~~~~~~~~~~~~~~~~~~~extrapolation
parameter $\alpha_{k}$ and positive definite maps $\Sigma$, $\Upsilon$ and $\tilde{T}$.\\
Iterations ($k\geq0$): Update $x^{k}$, $y^{k}$, $v^{k}$ as follows
$$
\left\{
\begin{array}{l}
\xi^{k}=x^{k}+\alpha_{k}(x^{k}-x^{k-1}),\\
\eta^{k}=y^{k}+\alpha_{k}(y^{k}-y^{k-1}),\\
\nu^{k}=v^{k}+\alpha_{k}(v^{k}-v^{k-1}),\\
\tilde{x}^{k+1}=prox_{\Sigma H}(\xi^{k}-\Sigma \eta^{k}-\Sigma K^{\ast}\nu^{k}),\\
\tilde{y}^{k+1}=prox_{\Upsilon \bar{G}^{\ast}}(\eta^{k}+\Upsilon \xi^{k}),\\
\tilde{v}^{k+1}=prox_{\tilde{T} F^{\ast}}(\nu^{k}+\tilde{T}
K(2\tilde{x}^{k+1}-\eta^{k})),\\
(x^{k+1}, y^{k+1},v^{k+1})=\rho_{k}(\tilde{x}^{k+1},
\tilde{y}^{k+1},\tilde{v}^{k+1})+(1-\rho_{k})(x^{k}, y^{k},v^{k}).
\end{array}
\right.
$$
~~~~~~~~~~~~~~~~~~~End for
\end{algorithmic}
\end{algorithm}
It turns out that the resulting method converges under appropriate
conditions.
\begin{thm}
In the setting of Theorem 3.1
let the following conditions holds :\\
(i) $\|\Sigma^{\frac{1}{2}}\Upsilon^{\frac{1}{2}}\|^{2}
+\|\Sigma^{\frac{1}{2}}K^{\ast}\tilde{T}^{\frac{1}{2}}\|^{2}<1$;\\
(ii)   $(\alpha_{k})_{k\in \mathbb{N}}$ is nondecreasing with
$\alpha_{1} = 0$ and $0\leq\alpha_{k}\leq\alpha < 1$ for every
$k\geq1$ and $\rho, \theta, \hat{\delta}>0$ are such that
$\hat{\delta}>\frac{\alpha^{2}(1+\alpha)+\alpha\theta}{1-\alpha^{2}}$
and
$0<\rho\leq\rho_{k}<\frac{\hat{\delta}-\alpha[\alpha(1+\alpha)+\alpha\hat{\delta}+\theta]}{\hat{\delta}[1+\alpha(1+\alpha)+\alpha\hat{\delta}+\theta]}$
$\forall k\geq1$.\\
Then the sequence $\{x^{k}\}$ generated by the Algorithm 3 converges
to a solution of Problem (1.4).

\end{thm}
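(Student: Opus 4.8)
The plan is to reproduce the proof of Theorem 3.1 almost verbatim, the only change being that the scalar proximal parameters $\sigma,\gamma,\tau$ and the preconditioner $P$ of (3.3) are replaced throughout by the selfadjoint positive definite maps $\Sigma,\Upsilon,\tilde{T}$ and the preconditioner $\bar{P}$ of (4.1). First I would invoke Lemma 4.2: hypothesis (i) of the theorem is exactly the condition $\|\Sigma^{\frac{1}{2}}\Upsilon^{\frac{1}{2}}\|^{2}+\|\Sigma^{\frac{1}{2}}K^{\ast}\tilde{T}^{\frac{1}{2}}\|^{2}<1$, so $\bar{P}$ is symmetric and positive definite on $\mathcal{Z}:=\mathcal{X}\times\mathcal{X}\times\mathcal{Y}$. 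Consequently $\langle z,z'\rangle_{\bar{P}}:=\langle z,\bar{P}z'\rangle_{I}$ is a genuine inner product whose norm $\|\cdot\|_{\bar{P}}$ is equivalent to $\|\cdot\|_{I}$ (because $\bar{P}$ is bounded and, by the coercivity estimate established in the proof of Lemma 4.2, boundedly invertible); write $\mathcal{Z}_{\bar{P}}$ for the resulting Euclidean space.

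Next I would put Algorithm 3 into the form of an inertial Krasnosel'skii-Mann iteration. Set $z^{k}=(x^{k},y^{k},v^{k})$ and $\varpi^{k}=(\xi^{k},\eta^{k},\nu^{k})=z^{k}+\alpha_{k}(z^{k}-z^{k-1})$, and keep the maximally monotone operator $A$ from the proof of Theorem 3.1 (the $3\times3$ operator matrix with diagonal $(\partial H,\partial G^{\ast},\partial F^{\ast})$ and skew coupling). The first-order optimality conditions of the three preconditioned proximity steps --- $prox_{\Sigma H}=(I+\Sigma\partial H)^{-1}$, $prox_{\Upsilon\bar{G}^{\ast}}=(I+\Upsilon\partial\bar{G}^{\ast})^{-1}$, $prox_{\tilde{T}F^{\ast}}=(I+\tilde{T}\partial F^{\ast})^{-1}$ --- combine with the coupling terms $-\Sigma\eta^{k}-\Sigma K^{\ast}\nu^{k}$, $+\Upsilon\xi^{k}$ and $+\tilde{T}K(2\tilde{x}^{k+1}-\eta^{k})$ exactly as in the derivation of (3.5) to yield $0\in A\tilde{z}^{k+1}+\bar{P}(\tilde{z}^{k+1}-\varpi^{k})$, i.e. $\tilde{z}^{k+1}=(I+\bar{P}^{-1}\circ A)^{-1}(\varpi^{k})$. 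Adjoining the relaxation step, Algorithm 3 becomes exactly (3.6) with $P$ replaced by $\bar{P}$, hence scheme (2.6) of Lemma 2.2 with $M:=\bar{P}^{-1}\circ A$, $T:=(I+M)^{-1}$ and relaxation parameters $\rho_{k}$.

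It then remains to transfer the monotonicity/convergence argument of Theorem 3.1 to the metric $\mathcal{Z}_{\bar{P}}$. The operator $A$ is maximally monotone on $\mathcal{Z}_{I}$ --- this was shown in the proof of Theorem 3.1 and does not depend on the preconditioner, since $A$ splits as a block-diagonal maximally monotone operator plus a bounded, full-domain skew operator, so [3, Corollary 24.4(i)] applies. As $\bar{P}$ is selfadjoint positive definite, $M=\bar{P}^{-1}\circ A$ is maximally monotone on $\mathcal{Z}_{\bar{P}}$, whence $T=(I+M)^{-1}\in\mathcal{A}(\mathcal{Z}_{\bar{P}},\frac{1}{2})$ by [3, Corollary 23.8] and is in particular nonexpansive on $\mathcal{Z}_{\bar{P}}$; moreover $Fix(T)=zer(A)\neq\emptyset$ by Assumption 2.1. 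Since hypothesis (ii) is precisely the parameter requirement of Lemma 2.2, that lemma applied with $\tilde{M}=\mathcal{Z}_{\bar{P}}$ gives $\sum_{k}\|z^{k+1}-z^{k}\|_{\bar{P}}^{2}<+\infty$ and $z^{k}\to z^{\ast}\in Fix(T)$ in $\mathcal{Z}_{\bar{P}}$, hence also in $\mathcal{Z}_{I}$ by equivalence of norms. Finally, unwinding $0\in Az^{\ast}$ recovers the primal-dual optimality system of (1.4) (cf. (2.1)), so the $\mathcal{X}$-component $x^{\ast}$ of $z^{\ast}$ solves (1.4) and $x^{k}\to x^{\ast}$.

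The step I expect to demand the most care is the algebraic identification in the second paragraph: checking that the three preconditioned proximal updates, together with the extrapolation and coupling terms, reproduce precisely the rows of $A+\bar{P}(\,\cdot-\varpi^{k})$, so that the iteration genuinely is the resolvent of $\bar{P}^{-1}\circ A$. This is the same computation as in Theorem 3.1 but now with operator-valued ``step sizes'', and the point to watch is that $\Sigma,\Upsilon,\tilde{T}$ enter the resulting inclusion only through their inverses --- which is why $\Sigma^{-1},\Upsilon^{-1},\tilde{T}^{-1}$, and not $\Sigma,\Upsilon,\tilde{T}$, sit on the diagonal of $\bar{P}$. Everything afterwards (maximal monotonicity, the averagedness constant $\frac{1}{2}$, the summability and convergence of $(z^{k})$, and the passage to $x^{k}$) is a routine transcription of the $\mathcal{Z}_{P}$ argument, the one genuinely new input being Lemma 4.2.
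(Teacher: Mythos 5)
Your proposal follows exactly the paper's route: the paper proves Theorem 4.1 by citing Lemma 4.2 to get that $\bar{P}$ in (4.1) is symmetric and positive definite under hypothesis (i), and then declaring that the argument of Theorem 3.1 goes through verbatim with $\bar{P}$ in place of $P$ — which is precisely what you carry out, only with the intermediate steps (the inclusion $0\in A\tilde{z}^{k+1}+\bar{P}(\tilde{z}^{k+1}-\varpi^{k})$, maximal monotonicity of $\bar{P}^{-1}\circ A$ in $\mathcal{Z}_{\bar{P}}$, and the appeal to Lemma 2.2) written out explicitly rather than left implicit. Your write-up is a faithful, more detailed version of the paper's own proof.
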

\begin{proof}
As shown in Lemma 4.2, the condition
$\|\Sigma^{\frac{1}{2}}\Upsilon^{\frac{1}{2}}\|^{2}
+\|\Sigma^{\frac{1}{2}}K^{\ast}\tilde{T}^{\frac{1}{2}}\|^{2}<1$
ensure that the matrix $\bar{P}$ defined in (4.1) is symmetric and
positive definite. Therefore, with the same proof of Theorem 3.1, we
can obtain Theorem 4.1.

\end{proof}
For selfadjoint, positive definite maps $\Sigma$, $\Upsilon$,
$\tilde{T}$, we consider the following algorithm which we shall
refer to as a preconditioned splitting inertial primal-dual fixed
point algorithm(PSIPDFP).

\begin{algorithm}[H]
\caption{Preconditioned splitting inertial primal-dual fixed point
algorithm(PSIPDFP).}
\begin{algorithmic}\label{1}
\STATE Initialization: Choose $x^{0}, x^{1},y^{0}, y^{1}\in \mathcal
{X}$, $
 v^{0}_{1}, v^{1}_{1}\in \mathcal{Y}_{1},\cdots,v^{0}_{m}, v^{1}_{m}\in \mathcal{Y}_{m}$, relaxation\\ ~~~~~~~~~~~~~~~~~~~parameters
$(\rho_{k})_{k\in \mathbb{N}}$, extrapolation parameter $\alpha_{k}$
and positive definite \\~~~~~~~~~~~~~~~~~~~maps $\Sigma$, $\Upsilon$ and $\tilde{T}$.\\
Iterations:~~~~ for every  $k\geq0$
$$
\left\{
\begin{array}{l}
\xi^{k}=x^{k}+\alpha_{k}(x^{k}-x^{k-1}),\\
\tilde{x}^{k+1}=\xi^{k}-\Sigma \eta^{k}-\frac{1}{m} \Sigma\sum_{i=1}^{m}K^{\ast}_{i}\nu^{k}_{i},\\
x^{k+1}=\rho_{k}\tilde{x}^{k+1}+(1-\rho_{k})x^{k},\\
\eta^{k}=y^{k}+\alpha_{k}(y^{k}-y^{k-1}),\\
\tilde{y}^{k+1}=prox_{\Upsilon G^{\ast}}(\eta^{k}+\Upsilon \xi^{k}),\\
y^{k+1}=\rho_{k}\tilde{y}^{k+1}+(1-\rho_{k})y^{k},\\
\nu_{i}^{k}=v^{k}_{i}+\alpha_{k}(v^{k}_{i}-v^{k-1}_{_{i}}),i=1,\cdots,m,\\
\tilde{v}^{k+1}_{i}=prox_{\tilde{T}
F_{i}^{\ast}}(\nu^{k}_{i}+\tilde{T}
K_{i}(2\tilde{x}^{k+1}-\eta^{k})),i=1,\cdots,m,\\
v^{k+1}_{i}=\rho_{k}\tilde{v}^{k+1}_{i}+(1-\rho_{k})v^{k}_{_{i}},i=1,\cdots,m.
\end{array}
\right.
$$
~~~~~~~~~~~~~~~~~~~End for
\end{algorithmic}
\end{algorithm}
\begin{thm}
In the setting of Theorem 3.2 let the following conditions holds :\\
(i) $\|\Sigma^{\frac{1}{2}}\Upsilon^{\frac{1}{2}}\|^{2}
+\sum_{i=1}^{m}\|\Sigma^{\frac{1}{2}}K^{\ast}_{i}\tilde{T}^{\frac{1}{2}}\|^{2}<1$;\\
(ii)   $(\alpha_{k})_{k\in \mathbb{N}}$ is nondecreasing with
$\alpha_{1} = 0$ and $0\leq\alpha_{k}\leq\alpha < 1$ for every
$k\geq1$ and $\rho, \theta, \hat{\delta}>0$ are such that
$\hat{\delta}>\frac{\alpha^{2}(1+\alpha)+\alpha\theta}{1-\alpha^{2}}$
and
$0<\rho\leq\rho_{k}<\frac{\hat{\delta}-\alpha[\alpha(1+\alpha)+\alpha\hat{\delta}+\theta]}{\hat{\delta}[1+\alpha(1+\alpha)+\alpha\hat{\delta}+\theta]}$
$\forall k\geq1$.\\
Then the sequence $\{x^{k}\}$ generated by the Algorithm 4 converges
to a solution of Problem (1.3).

\end{thm}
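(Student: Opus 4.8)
The plan is to obtain Theorem 4.2 from Theorem 4.1 in exactly the way Theorem 3.2 was obtained from Theorem 3.1, and to run the proof of Theorem 3.1 with the scalar preconditioner $P$ of (3.3) replaced by the operator-valued preconditioner $\bar P$ of (4.1). First I would note that Algorithm 4 (PSIPDFP) is nothing but Algorithm 3 (IPDFP$^{2}$) applied to the reformulation (1.3) of Problem (1.1), i.e. to Problem (1.4) with $H=\delta_{C}$, $F(Kx)=\sum_{i=1}^{m}F_{i}(K_{i}x_{i})$, $\bar G(x)=\sum_{i=1}^{m}\frac{1}{m}G(x_{i})$ and $Kx=(K_{1}x_{1},\dots,K_{m}x_{m})$. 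Just as in the passage following the proof of Theorem 3.1, the map $prox_{\Sigma\delta_{C}}$ reduces to the projection onto the diagonal subspace $C$; this forces the components of $\tilde x^{k+1}$ to coincide and equal $\xi^{k}-\Sigma\eta^{k}-\frac{1}{m}\Sigma\sum_{i=1}^{m}K_{i}^{\ast}\nu_{i}^{k}$, collapses the primal (and the $\bar G$-dual) iterates to single $\mathcal{X}$-vectors, and makes the $F$-dual update split componentwise because $F^{\ast}$, $K^{\ast}$ and $prox_{\tilde T F^{\ast}}$ act coordinatewise. Hence Algorithm 4 is literally Algorithm 3 for this instance of (1.4), and it suffices to check that conditions (i)--(ii) of Theorem 4.2 imply those of Theorem 4.1.

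Second, I would verify the preconditioner condition. For the block operator $K=\mathrm{diag}(K_{i})$ one has $K^{\ast}=\mathrm{diag}(K_{i}^{\ast})$, and on the diagonal subspace the effective primal-to-dual map is $x\mapsto(K_{1}x,\dots,K_{m}x)$ with adjoint $(\nu_{1},\dots,\nu_{m})\mapsto\sum_{i=1}^{m}K_{i}^{\ast}\nu_{i}$; by Cauchy--Schwarz this adjoint, conjugated by $\Sigma^{1/2}$ and $\tilde T^{1/2}$, has squared norm at most $\sum_{i=1}^{m}\|\Sigma^{1/2}K_{i}^{\ast}\tilde T^{1/2}\|^{2}$. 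Therefore condition (i) of Theorem 4.2, namely $\|\Sigma^{1/2}\Upsilon^{1/2}\|^{2}+\sum_{i=1}^{m}\|\Sigma^{1/2}K_{i}^{\ast}\tilde T^{1/2}\|^{2}<1$, implies the hypothesis of Lemma 4.2 for this instance, so Lemma 4.2 guarantees that the preconditioner $\bar P$ of (4.1) is symmetric and positive definite, in particular bounded with bounded inverse. Condition (ii) of Theorem 4.2 is identical to condition (ii) of Theorem 4.1.

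Third, with $\bar P\succ 0$ in hand I would transcribe the proof of Theorem 3.1 word for word with $P$ replaced by $\bar P$: endow $\mathcal{Z}$ with the inner product $\langle z,z'\rangle_{\bar P}:=\langle z,\bar P z'\rangle_{I}$, let $A$ be the block operator appearing in the proof of Theorem 3.1 built from the present $H,\bar G,F,K$, and set $T:=(I+\bar P^{-1}\circ A)^{-1}$. As there, the diagonal part $\partial H\times\partial\bar G^{\ast}\times\partial F^{\ast}$ is maximally monotone in $\mathcal{Z}_{I}$ by [3, Prop.~23.16], the skew part is maximally monotone with full domain by [3, Example~20.30], hence $A$ is maximally monotone by [3, Cor.~24.4(i)]; by injectivity of $\bar P$, $\bar P^{-1}\circ A$ is maximally monotone in $\mathcal{Z}_{\bar P}$ and $T\in\mathcal{A}(\mathcal{Z}_{\bar P},\frac{1}{2})$, in particular nonexpansive, with $Fix(T)=zer(A)\neq\emptyset$ by Assumption~2.1 and the Fenchel--Rockafellar identity (2.1). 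Algorithm 4 then takes the inertial Krasnosel'skii--Mann form (3.6) with $\bar P$ in place of $P$, namely $\varpi^{k}=z^{k}+\alpha_{k}(z^{k}-z^{k-1})$ and $z^{k+1}=\rho_{k}T(\varpi^{k})+(1-\rho_{k})\varpi^{k}$; Lemma 2.2 applies under condition (ii) and yields $\sum_{k}\|z^{k+1}-z^{k}\|_{\bar P}^{2}<+\infty$ and weak convergence of $(z^{k})$ to some $z^{\ast}\in Fix(T)$. Since $\|\cdot\|_{I}$ and $\|\cdot\|_{\bar P}$ are equivalent, the $x$-component $x^{k}$ converges to the primal part of $z^{\ast}$, which solves Problem (1.3).

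The step I expect to require the most care is the reduction in the first paragraph together with the bounded invertibility of $\bar P$: one must check that projecting onto $C$ genuinely decouples the $v$-iteration and collapses the $x$- and $y$-iterations to the form displayed in Algorithm 4 (the same computation as in the Theorem 3.1 $\to$ Theorem 3.2 passage), and one must know that $\bar P$ is boundedly invertible so that the metrics $\|\cdot\|_{I}$ and $\|\cdot\|_{\bar P}$ are equivalent and maximal monotonicity transfers from $\mathcal{Z}_{I}$ to $\mathcal{Z}_{\bar P}$ --- precisely the content of Lemma 4.2. Everything else is routine bookkeeping once Theorems 3.1--3.2 and 4.1 are available.
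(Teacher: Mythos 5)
Your proposal is correct and follows essentially the same route as the paper: the paper likewise stacks the operators into $\bar{K}$, forms the block preconditioner $\bar{\mathbf{P}}$, invokes Lemma 4.2 together with condition (i) to get symmetry and positive definiteness, and then concludes by reducing to the convergence machinery already established for Algorithms 1--2 (Theorems 3.1--3.2) in the $\bar{P}$-metric. You merely spell out the reduction of Algorithm 4 to Algorithm 3 applied to (1.3) and the inertial Krasnosel'skii--Mann argument in more detail than the paper's brief proof does.
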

\begin{proof}

 Set
$$\bar{K}^{\ast}:= \left(
  \begin{array}{ccccccc}
    K_{1}^{\ast} \\
   \vdots \\
    K_{m}^{\ast} \\
  \end{array}
\right), \bar{\textbf{P}}:=  \left(
  \begin{array}{ccccccc}
    \Sigma^{-1} & -I  & -\bar{K}^{\ast}\\
    -I & \Upsilon^{-1}  &  0\\
    -\bar{K} & 0  & \tilde{T}^{-1}\\

  \end{array}
\right).
$$

 Then from  the condition $\|\Sigma^{\frac{1}{2}}\Upsilon^{\frac{1}{2}}\|^{2}
+\sum_{i=1}^{m}\|\Sigma^{\frac{1}{2}}K^{\ast}_{i}\tilde{T}^{\frac{1}{2}}\|^{2}<1$
and Lemma 4.2, we can know that $\bar{\textbf{P}}$ is symmetric and
positive definite. Hence, the convergence of the Algorithm 4 to an
optimal solution  of (1.3)  follows from the weak convergence of the
Algorithm 2.

\end{proof}
\subsection{Diagonal Preconditioning}
In this section, we show how we can choose pointwise step sizes for
both the primal and the dual variables that will ensure the
convergence of the algorithm. The next result is an adaption of the
preconditioner proposed in [17].

\begin{lem}
Let $\Sigma= diag(\sigma_{1}, \cdots , \sigma_{n})$, $\Upsilon=
diag(\gamma_{1}, \cdots , \gamma_{n})$ and $\tilde{T} =
diag(\tau_{1},\cdots , \tau_{m})$, then we can know that $M=
diag(\gamma_{1}, \cdots , \gamma_{n},\tau_{1},\cdots , \tau_{m})$.
In particular, we set $M= diag(\gamma_{1}, \cdots ,
\gamma_{n},\tau_{1},\cdots , \tau_{m})=diag(\varphi_{1}, \cdots ,
\varphi_{n+m})$ with
$$\sigma_{j}=\frac{1}{\sum_{i=1}^{n+m}|D_{i,j}|^{2-s}},\varphi_{i}=\frac{1}{\sum_{j=1}^{n}|D_{i,j}|^{s}},\eqno{(4.2)}$$
then for any $s\in[0,2]$\\
$$\|\Sigma^{\frac{1}{2}}\Upsilon^{\frac{1}{2}}\|^{2}
+\|\Sigma^{\frac{1}{2}}K^{\ast}\tilde{T}^{\frac{1}{2}}\|^{2}=\|\Sigma^{\frac{1}{2}}DM^{\frac{1}{2}}\|^{2}\leq1.\eqno{(4.3)}$$

\end{lem}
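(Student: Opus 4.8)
The plan is to transplant the diagonal preconditioner argument of Pock and Chambolle~[17] to the present three-block situation. First I would record the two bookkeeping remarks already implicit in the proof of Lemma~4.2, where $D=(I,K^{\ast})$ and $M=\mathrm{diag}(\Upsilon,\tilde{T})$ were introduced. Since $\Upsilon=\mathrm{diag}(\gamma_{1},\dots,\gamma_{n})$ and $\tilde{T}=\mathrm{diag}(\tau_{1},\dots,\tau_{m})$, the map $M$ is genuinely diagonal, with diagonal entries $\varphi_{1},\dots,\varphi_{n+m}$; and, because $\Sigma^{\frac{1}{2}}DM^{\frac{1}{2}}$ is the block row $\big(\Sigma^{\frac{1}{2}}\Upsilon^{\frac{1}{2}},\ \Sigma^{\frac{1}{2}}K^{\ast}\tilde{T}^{\frac{1}{2}}\big)$, the identity $\|\Sigma^{\frac{1}{2}}\Upsilon^{\frac{1}{2}}\|^{2}+\|\Sigma^{\frac{1}{2}}K^{\ast}\tilde{T}^{\frac{1}{2}}\|^{2}=\|\Sigma^{\frac{1}{2}}DM^{\frac{1}{2}}\|^{2}$ is exactly the one used there. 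Consequently the whole statement reduces to the single operator-norm estimate $\|\Sigma^{\frac{1}{2}}DM^{\frac{1}{2}}\|\le 1$ for the choice (4.2) of $\Sigma$ and $M$.

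To prove that estimate I would fix an arbitrary vector $w$ and expand, writing $D_{ij}$ for the entries of $D$ with the index conventions of (4.2),
\[
\|\Sigma^{\frac{1}{2}}DM^{\frac{1}{2}}w\|^{2}=\sum_{j}\sigma_{j}\Big(\sum_{i}D_{ij}\sqrt{\varphi_{i}}\,w_{i}\Big)^{2}.
\]
The decisive move is to split each coefficient as $D_{ij}=|D_{ij}|^{\frac{2-s}{2}}\cdot\big(\mathrm{sgn}(D_{ij})\,|D_{ij}|^{\frac{s}{2}}\big)$ and to apply the Cauchy-Schwarz inequality inside the $i$-sum, which gives $\big(\sum_{i}D_{ij}\sqrt{\varphi_{i}}\,w_{i}\big)^{2}\le\big(\sum_{i}|D_{ij}|^{2-s}\big)\big(\sum_{i}|D_{ij}|^{s}\varphi_{i}w_{i}^{2}\big)$. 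By the definition of $\sigma_{j}$ in (4.2) the first factor equals $1/\sigma_{j}$, so the weights $\sigma_{j}$ cancel; interchanging the two summations and using the definition of $\varphi_{i}$ in (4.2), namely $\varphi_{i}\sum_{j}|D_{ij}|^{s}=1$, one is left with
\[
\|\Sigma^{\frac{1}{2}}DM^{\frac{1}{2}}w\|^{2}\le\sum_{i}\varphi_{i}w_{i}^{2}\sum_{j}|D_{ij}|^{s}=\sum_{i}w_{i}^{2}=\|w\|^{2}.
\]
Since $w$ was arbitrary, $\|\Sigma^{\frac{1}{2}}DM^{\frac{1}{2}}\|\le 1$, and combined with the identity recalled above this yields (4.3).

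The parts that will require genuine care, and that I expect to be the only real obstacles, are the following. The exponent pair $\big(\frac{2-s}{2},\frac{s}{2}\big)$ is the unique splitting for which both normalizations in (4.2) telescope, so the validity of the argument over the whole interval $s\in[0,2]$ rests on this choice; at the endpoints $s=0$ and $s=2$ one must read $0^{0}=0$, as in~[17]. One must also dispose of degenerate rows and columns of $D$: if $\sum_{i}|D_{ij}|^{2-s}=0$ or $\sum_{j}|D_{ij}|^{s}=0$, the corresponding $\sigma_{j}$ or $\varphi_{i}$ is declared $+\infty$ following the convention of~[17], but such indices contribute nothing to the sums above, so the estimate is untouched. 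Everything else is routine bookkeeping, and once the Cauchy-Schwarz split is fixed the write-up is short.
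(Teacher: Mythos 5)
Your proposal is correct and takes essentially the same approach as the paper: the paper's proof consists of deferring to the proof of [17, Lemma 2] (Pock--Chambolle), and your Cauchy--Schwarz splitting $D_{ij}=|D_{ij}|^{\frac{2-s}{2}}\cdot\mathrm{sgn}(D_{ij})|D_{ij}|^{\frac{s}{2}}$ followed by the telescoping cancellation of the weights in (4.2) is exactly that argument, written out for $D=(I,K^{\ast})$ and $M=\mathrm{diag}(\Upsilon,\tilde{T})$. Your treatment of the block-norm identity and of the endpoints $s=0,2$ matches the paper's (implicit) usage, so nothing further is needed.
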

\begin{proof}
In order to prove the inequality, we need to find an upper bound on
$\|\Sigma^{\frac{1}{2}}DM^{\frac{1}{2}}\|^{2}$. From the proof of
[17,Lemma 2] we can obtain the results directly.

\end{proof}

\section{Numerical experiments}
We consider the problem of $l_{1}$-regularized logistic regression.
Denoting by $m$ the number of observations and by $q$ the number of
features, the optimization problem writes
$$\inf_{x\in \mathbb{R}^{q}}\frac{1}{m}\sum_{i=1}^{m}\log(1+e^{-y_{i}a_{i}^{T}x})+\tau\|x\|_{1},\eqno{(8.1)}$$
where the $(y_{i})_{i=1}^{m}$ are in $\{-1,+1\}$, the
$(a_{i})_{i=1}^{m}$ are in $\mathbb{R}^{q}$, and $\tau>0$ is a
scalar. Let $(\mathcal{W})_{n=1}^{N}$ indicate a partition of $\{1,
. . . ,m\}$. The optimization problem then writes

$$\inf_{x\in \mathbb{R}^{q}}\sum_{n=1}^{N}\sum_{i\in\mathcal{W}_{n}}\frac{1}{m}\log(1+e^{-y_{i}a_{i}^{T}x})+\tau\|x\|_{1},\eqno{(8.2)}$$

or, splitting the problem between the batches

$$\inf_{x\in \mathbb{R}^{N^{q}}}\sum_{n=1}^{N}(\sum_{i\in\mathcal{W}_{n}}\frac{1}{m}\log(1+e^{-y_{i}a_{i}^{T}x_{n}})+\frac{\tau}{N}\|x_{n}\|_{1})+\iota_{\mathcal{C}(x)},\eqno{(8.3)}$$
where $x = (x_{1}, ..., x_{N})$ is in  $\mathbb{R}^{N^{q}}$. It is
easy to see that Problems (8.1), (8.2) and (8.3) are equivalent and
Problem (8.3) is in the form of (6.2).
\section{Conclusion}

In  this paper, we introduced a new framework for stochastic
coordinate descent and used on a algorithm called ADMMDS$^{+}$. As a
byproduct, we obtained a stochastic approximation algorithm with
dynamic stepsize which can be used to handle distinct data blocks
sequentially. We also obtained an asynchronous distributed algorithm
with dynamic stepsize which enables the processing of distinct
blocks on different machines.

\noindent \textbf{Acknowledgements}

This work was supported by the National Natural Science Foundation
of China (11131006, 41390450, 91330204, 11401293), the National
Basic Research Program of China (2013CB 329404), the Natural Science
Foundations of Jiangxi Province (CA20110\\
7114, 20114BAB 201004).

\end{document}